\newcommand{\tikzAngleOfLine}{\tikz@AngleOfLine}
\def\tikz@AngleOfLine(#1)(#2)#3{%
\pgfmathanglebetweenpoints{%
\pgfpointanchor{#1}{center}}{%
\pgfpointanchor{#2}{center}}
\pgfmathsetmacro{#3}{\pgfmathresult}%
}
\newcommand{\bN}{\mathbb{N}}
\newcommand{\ba}{\bar{\alpha}}
\newcommand{\La}{\Lambda}
\newcommand{\vf}{\varphi}
\newcommand{\ul}{\underline}
\newcommand{\rad}{\operatorname{rad}}
\renewcommand{\mod}{\operatorname{mod}}
\newcommand{\Hom}{\operatorname{Hom}}
\begin{document}

\newtheorem{defi}{Definition}[section]
\newtheorem{rem}[defi]{Remark}
\newtheorem{prop}[defi]{Proposition}
\newtheorem{ques}[defi]{Question}
\newtheorem{lemma}[defi]{Lemma}
\newtheorem{cor}[defi]{Corollary}
\newtheorem{thm}[defi]{Theorem}
\newtheorem{expl}[defi]{Example}

\baselineskip=16pt
\parindent0pt

\title[Period four]{Tame symmetric algebras of period four}

\author[K. Erdmann]{Karin Erdmann}
\address[Karin Erdmann]{Mathematical Institute, University of Oxford, UK}
\email{erdmann@maths.ox.ac.uk} 

\author[A. Hajduk]{Adam Hajduk}
\address[Adam Hajduk]{Faculty of Mathematics and Computer Science, 
Nicolaus Copernicus University, Chopina 12/18, 87-100 Torun, Poland}
\email{ahajduk@mat.umk.pl}

\author[A. Skowyrski]{Adam Skowyrski}
\address[Adam Skowyrski]{Faculty of Mathematics and Computer Science, 
Nicolaus Copernicus University, Chopina 12/18, 87-100 Torun, Poland}
\email{skowyr@mat.umk.pl} 

\subjclass[2020]{Primary: 16D50, 16E30, 16G20, 16G60 }
\keywords{Symmetric algebra, tame algebra, periodic algebra, quiver}

\begin{abstract} In this paper we are concerned with the structure of tame symmetric algebras $A$ of period four (TSP4 algebras, 
for short). We will mostly focus on the case when the Gabriel quiver of $A$ is biserial, i.e. there are at most two arrows ending and 
at most two arrows starting at each vertex, but some of the results can be easily extended to general case. Here, we serve a basis for 
upcoming series of articles devoted to solve the problem of classification of all TSP4 algebras with biserial Gabriel quiver. We present 
a range of properties (with relatively short proofs) which must hold for the Gabriel quiver of a tame symmetric algebra of period four. 
Amongst others we show that triangles (and squares) appear naturally in the Gabriel quivers of such algebras, so as for weighted surface 
algebras \citep{WSA,WSA-GV,WSA-corr}. \end{abstract}

\maketitle

\section{Introduction}\label{sec:1} 

Classical examples of tame symmetric algebras of period four are 2-blocks of finite-dimensional group algebras with quaternion defect 
groups. More recently it was discovered that all weighted surface algebras \cite{WSA} (see also \cite{WSA-GV} and \cite{WSA-corr}) are 
tame symmetric of period four, and so are virtual mutations investigated in \cite{HSS} or so called weighted generalized triangulation 
algebras \cite{SS}, which generalize both mentioned classes. The main result of \cite{AGQT} established the classification of tame 
symmetric algebras of period four whose Gabriel quiver is 2-regular, which gives an evidence that general classification may be in 
reach after some work. A full classification in the biserial case seems to be an exciting challenge. \medskip 

This paper is a contribution towards this goal. Here we present a range of properties with short proofs, but which will be essential 
input for the general classification (work in progress). \medskip 

Throughout we fix an algebraically closed field, and we consider finite-dimensional associative $K$-algebras with identity. We also 
assume that algebras are basic and connected. Recall that an algebra $A$ is {\it self-injective}, provided that $\La$ is injective 
as a right $\La$-module, i.e. projective modules are also injective (see also \cite{Sk}). In this paper, we focus our attention on 
{\it symmetric} algebras, that is these self-injective algebras, for which there is a nondegenerate symmetric $K$-bilinear form 
$\La\times \La\to K$. There are many classical examples of symmetric algebras, for instance, blocks of finite-dimensional group 
algebras \cite{E1} or Hecke algebras associated to Coxeter groups \cite{ad1}. Any algebra $\La$ is a quotient of its trivial extension 
$T(\La)$, which is a symmetric algebra. \medskip
 
For an algebra $\La$ we denote by $\mod \La$ the category of finitely generated (right) $\La$-modules. For a module $M$ in $\mod \La$, 
its {\it syzygy} is a module $\Omega(M)=ker(\pi)$, where $\pi:P\to M$ is a projective cover of $M$ in $\mod\La$ (so syzygy is defined 
up to isomorphism).\medskip 

We call a module $M$ in $\mod \La$ a {\it periodic module} if $\Omega^d(M)\cong M$, for some $d\geqslant 1$ (the smallest such $d$ is 
the {\it period} of $M$). Recall that an algebra $\La$ is called a {\it periodic algebra} if $\La$ is periodic as an $\La$-bimodule, 
or equivalently, $\La$ is a periodic module over its enveloping algebra $\La^e=\La\otimes_K\La$. Periodicity of an algebra implies 
periodicity of all non-projective indecomposable $A$-modules (see for example \cite[Theorem IV.11.19]{SkY}). In particular, if $\La$ is 
a periodic algebra, then all simple $\La$-modules are periodic. Moreover, it is known \cite[see Theorem 1.4]{GSS} that periodicity of 
simples in $\mod\La$ implies $\La$ is self-injective, and hence, periodic algebras form a subclass in the class of self-injective algebras. 
\medskip 

Here we work with bound quiver algebras $\La=KQ/I$, where the Gabriel quiver $Q$ is biserial: that is, at each vertex at most two 
arrows start and at most two arrows end. We will consider algebras $\La$ which are both symmetric and tame, and we assume that $\La$ 
is a periodic algebra of period four. Any such algebra is said to be a TSP4 algebra. We will give an overview of general properties of 
Gabriel quivers $Q$ and minimal generators of ideals $I$ for such algebras. A full classification by quivers and relations requires much 
more efforts. In particular, we shall see that triangles (and squares) appear naturally; see Section \ref{sec:4}. Moreover, in the last 
section, we present partial results describing some distinguished types of vertices. \medskip 

For the necessary background in the representation theory we refer to books \cite{ASS, SkY}. \bigskip 

\section{Preliminaries}\label{sec:2}  

Let $\La = KQ/I$ be an admissible presentation of $\La$, where the algebra is tame and symmetric, and has $\Omega$-period $4$, as an 
algebra. In particular,  all simple modules are $\Omega$-periodic as $A$-modules with period dividing $4$ \cite[Theorem IV.11.19]{SkY}. 
In fact, we can assume that all simples have period $4$ (see Remark \ref{period2}). We also assume $Q$ is connected, that is $\La$ is 
indecomposable as an algebra. For a vertex $i\in Q$, we denote by $P_i$ the indecomposable projective module in $\mod\La$ associated 
to vertex $i$, and by $p_i$ its dimension vector $p_i:=\ul{\dim}(P_i)$. Similarly, we write $S_i$ and $s_i$, for the simple module 
associated to vertex $i$ and its dimension vector. \smallskip 

For a vertex $i$ of the quiver $Q$, we let $i^-$ be the set of arrows ending at $i$, and $i^+$ the set of arrows starting at $i$.
In this paper, we assume  the sizes $|i^-|$ and $i^+|$ are  at most $2$. With this, $Q$ is said to be {\it 2-regular} if 
$|i^-|=|i^+|=2$, and {\it biserial} if $1\leq |i^-|, |i^+|\leq 2$. We say that $i\in Q_0$ is a {\it regular} vertex ($1$- or $2$-regular), 
provided $|i^-|=|i^+|$ (and the size is equal $1$ or $2$, respectively). Otherwise, we call $i$ a {\it non-regular} vertex. \smallskip 

We will use the following notation and convention for arrows: we write $\alpha, \ba$ for the arrows starting at vertex $i$, with the 
convention that $\ba$ does not exist in case $|i^+|=1$. Similarly we write $\gamma, \gamma^*$ for the arrows ending at some vertex $i$,
where again $\gamma^*$ may not exist. \smallskip 

Then $Q$ has a subquiver 
$$\xymatrix@R=0.3cm{x\ar[rd]^{\gamma}&&y\ar[ld]_{\gamma^*}\\&i\ar[ld]_{\alpha}\ar[rd]^{\ba}&\\j&&k}$$ 

Consider the simple module $S_i$, $i\in Q_0$. We will briefly discuss some basic consequences of $\Omega$-periodicity of $S_i$, 
mainly, the associated exact sequence. Recall that there are natural isomorphisms $\Omega(S_i)=\rad P_i=\alpha\La+\ba\La$ and 
$\Omega^-(S_i)\cong (\gamma,\gamma^*)\Lambda\subset P_x\oplus P_y$. In particular, it follows that the module $P_i^+=P_j\oplus P_k$ 
is a projective cover of $\Omega(S_i)$ and the module $P_i^-=P_x\oplus P_y$ is an injective envelope of $\Omega^-(S_i)$ ($\La$ is 
symmetric). Consequently, involving $\Omega$-periodicity (period $4$) of $S_i$, we conclude that there is an exact sequence in $\mod\La$ 
of the form 
$$0\to S_i\to P_i \stackrel{d_3}\to  P_i^- \stackrel{d_2}\to  P_i^+ \stackrel{d_1}\to P_i \to S_i\to 0 \leqno{(*)}$$ 
with $Im(d_k)\cong\Omega^k(S_i)$, for $k\in\{1,2,3\}$. By our convention, $P_y$ or $P_k$ may not exist. Moreover, we denote by $p_i^+$ 
(respectively, $p_i^+$) the dimension vector $\ul{\dim}(P_i^+)$ (respectively, $\ul{\dim}(P_i^-)$). Using the above sequence, one 
easily gets that $p_i^+=p_i^-$. We use this fact (without mentioning) many times in the rest part of the paper. \smallskip 

Now, we will show a few examples of results obtained by using exact sequences of the form $(*)$. As a first application note the 
following lemma. 

\begin{lemma}\label{lem:2.1} 
If $\La$ has infinite type then there is no arrow $\alpha: i\to j$ with $i^+ = \{ \alpha\} = j^-$. \end{lemma}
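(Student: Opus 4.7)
The plan is to assume such an arrow $\alpha\colon i \to j$ exists and deduce that the connected quiver $Q$ must be the two-cycle $i\rightleftarrows j$. Then $\La$ is a symmetric Nakayama algebra on two vertices, which is representation-finite, contradicting the infinite-type hypothesis.

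Applying the sequences $(*)$ for $S_i$ and $S_j$: the hypothesis $i^+=\{\alpha\}$ forces $P_i^+=P_j$, with $d_1\colon P_j\to P_i$ being left-multiplication by $\alpha$, while $j^-=\{\alpha\}$ forces $P_j^- = P_i$. The dimension identities $p_i^+=p_i^-$ and $p_j^+=p_j^-$ give
$$\dim\Omega^2 S_i + \dim\Omega^2 S_j = (\dim P_j - \dim P_i + 1) + (\dim P_i - \dim P_j + 1) = 2.$$
On the other hand, since $\La$ is symmetric, the socle generator $\sigma_j$ of $P_j$ lies in the two-sided socle of $\La$, so $\alpha\sigma_j=0$; hence $\sigma_j\in \ker d_1 = \Omega^2 S_i$. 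Similarly, for each $k\in j^+$ the generator $\sigma_k$ of $\operatorname{soc} P_k$ yields a linearly independent element of $\Omega^2 S_j\subset P_j^+$. Combined with the sum $=2$, this forces $|j^+|=1$ and $\dim\Omega^2 S_i = \dim\Omega^2 S_j = 1$; hence $\Omega^2 S_i = S_j$ and $\Omega^2 S_j = S_k$, where $\beta\colon j\to k$ is the unique arrow starting at $j$. Period $4$ then gives $S_k = \Omega^4 S_i = S_i$, whence $k=i$ and one obtains the $2$-cycle $i\xrightarrow{\alpha} j\xrightarrow{\beta} i$.

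Next I verify $|i^-|=1$. The injective envelope of $\Omega^{-1} S_i \cong P_i/S_i$ is $P_i^- = \bigoplus_{x\in i^-} P_x$, so since socles are preserved under essential extensions, $\operatorname{soc}(\Omega^{-1}S_i)$ has dimension $|i^-|$. But by period $4$, $\Omega^{-1}S_i = \Omega(\Omega^2 S_i) = \Omega S_j = \rad P_j$, whose socle is the simple module $S_j$. Hence $|i^-|=1$, and together with connectedness of $Q$ this forces $Q$ to consist solely of the arrows $\alpha,\beta$ on two vertices. Thus $\La$ is a symmetric Nakayama algebra, which is representation-finite, giving the contradiction.

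The main obstacle is justifying the containments $\sigma_j\in\Omega^2 S_i$ and $\sigma_k\in\Omega^2 S_j$; both crucially use that $\operatorname{soc}\La$ is a two-sided ideal in the symmetric algebra and hence annihilated by $\rad\La$ from both sides. Given this, the identity $\dim\Omega^2 S_i + \dim\Omega^2 S_j = 2$ combined with period $4$ drives the remainder of the proof.
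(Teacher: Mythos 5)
Your proof is correct; every step checks out against the framework the paper sets up in Section \ref{sec:2} (the sequence $(*)$, the identity $p_i^+=p_i^-$, and the fact that $P_i^\pm$ are projective covers/injective envelopes of the relevant syzygies). The overall strategy coincides with the paper's: force $Q$ to be the $2$-cycle on $i,j$ and invoke representation-finiteness of symmetric Nakayama algebras. Where you differ is in how the key isomorphisms $\Omega^2(S_i)\cong S_j$ and $\Omega^2(S_j)\cong S_i$ are obtained. The paper gets them in one line by observing that $i^+=\{\alpha\}=j^-$ gives $\Omega(S_i)=\alpha\La\cong\Omega^{-1}(S_j)$, and then reads off $P_i^-\cong P_j$ (projective cover of $\Omega^2(S_i)$) and $P_j^+\cong P_i$ (injective envelope of $\Omega^2(S_j)$), which immediately yields $|i^-|=1=|j^+|$. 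You instead run a dimension count, $\dim\Omega^2(S_i)+\dim\Omega^2(S_j)=2$, and combine it with the containments $\operatorname{soc}(P_j)\subseteq\ker d_1$ and $\operatorname{soc}(P_i^+ )\subseteq \Omega^2(S_j)$ coming from $J\cdot\operatorname{soc}\La=0$; this is longer but it is exactly the dimension-vector technique the paper itself uses in Lemma \ref{lem:2.2} and Lemma \ref{lem:1}, so it fits the toolkit. Two cosmetic remarks: your chain $\Omega^{-1}S_i=\Omega(\Omega^2S_i)$ should read $\Omega^{-1}S_i\cong\Omega^3S_i=\Omega(\Omega^2S_i)$, the first isomorphism being where period $4$ enters; and the degenerate case $i=j$ (a single loop) collapses to $K[x]/(x^n)$, still Nakayama of finite type, so the contradiction persists there too.
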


\begin{proof} Suppose there is such an arrow. Then $\Omega(S_i) = \alpha\La \cong \Omega^{-1}(S_j)$ and $\Omega^2(S_i)\cong S_j$.
Therefore in the exact sequence for $S_i$ the projective $P_i^-$ is isomorphic to $P_j$ and this means that there is a unique arrow 
ending at $i$ and it starts at $j$.

As well, in the exact sequence for $S_j$ we have $P_j^+ \cong P_i$ since $\Omega^2(S_j)\cong S_i$. Therefore there is a unique arrow 
starting at $j$ and it ends at $i$. Now, $Q$ is connected and hence has only two vertices and two arrows. Then $\La$ is a Nakayama 
algebra of finite representation type, hence a contradiction (see for example \cite[Theorems I.10.3 and 10.7]{SkY}). \end{proof} 

\begin{rem}\label{period2} \normalfont Actually, existence of arrow with the above described property implies that $\La$ is of finite 
type, as it is explained in the following note \cite{E2}. There is also proved this condition is equivalent to existence of a simple 
with period $2$. Hence, when dealing with TSP4 algebras of infinite type, we may assume that all simples have period exactly $4$. 
\end{rem} 

We have also the following observation. 

\begin{lemma}\label{lem:2.2} The quiver $Q$ does not have a subquiver of the form
$$j  \stackrel{\longrightarrow}\longleftarrow  i \longleftarrow t$$
where all arrows to and from $i$ are shown. \end{lemma}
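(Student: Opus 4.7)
The plan is to apply the exact sequence $(*)$ from the preliminaries to the simple module $S_i$ and compare dimension vectors on either side. The whole argument hinges on the identity $p_i^+ = p_i^-$, which was observed right after the sequence $(*)$ is introduced.

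First I would unpack the data at the vertex $i$ under the assumption. Since all arrows incident to $i$ are displayed, we have $i^+ = \{\alpha\}$ with $\alpha : i \to j$ (so in the convention of the paper, $\bar{\alpha}$ does not exist), while $i^- = \{\gamma, \gamma^*\}$ consists of the arrow $j \to i$ and the arrow $t \to i$. Consequently the projective cover of $\Omega(S_i) = \alpha\La$ is $P_i^+ = P_j$, while the injective envelope of $\Omega^{-1}(S_i)$ is $P_i^- = P_j \oplus P_t$.

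Next, I would invoke $(*)$ to conclude $p_i^+ = p_i^-$, that is
$$\underline{\dim}(P_j) = \underline{\dim}(P_j) + \underline{\dim}(P_t),$$
which forces $\underline{\dim}(P_t) = 0$. This is impossible since $P_t$ is a nonzero indecomposable projective module. The contradiction rules out the existence of such a subquiver.

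I do not expect any real obstacle here: the whole proof is essentially a one-line dimension count, once one correctly identifies $P_i^+$ and $P_i^-$ from the combinatorics of arrows at $i$. The only minor care needed is to ensure that the two arrows $i \to j$ and $j \to i$ are genuinely distinct contributions to $P_i^+$ and $P_i^-$ (so that $P_j$ appears in both), which is clear from the definition of $i^+$ and $i^-$ as sets of arrows.
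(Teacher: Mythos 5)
Your proposal is correct and follows exactly the paper's own argument: identify $P_i^+ \cong P_j$ and $P_i^- \cong P_j \oplus P_t$ from the arrows at $i$, then use the identity $p_i^+ = p_i^-$ coming from the exact sequence $(*)$ to force $\underline{\dim}(P_t) = 0$, a contradiction. No gaps; this is the same one-line dimension count as in the paper.
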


\begin{proof} Assume this happens. Then in the exact sequence for $S_i$ we have $P_i^+ \cong P_j$ and $P_i^-\cong P_j \oplus P_t$. 
Since $P_t\neq 0$ it follows that $\ul{\dim} P_i^+  \neq  \ul{\dim} P_i^-$, a contradiction. \end{proof} \medskip

To end this preliminary section we will give one simple lemma pertaining vectors $p_i^+=p_i^-$, for $i\in Q_0$ (this common dimension 
vector of two modules $P_i^+$ and $P_i^-$ will be denoted by $\hat{p}_i$). \smallskip 

It is clear from the exact sequence $(*)$ that $p_i$ is less or equal to $\hat{p}_i+s_i$ (in the product order), since 
$p_i-s_i=\ul{\dim}\Omega^1(S_i)$ is less than $\ul{\dim}(R^+)=\hat{p}_i$. Moreover, $\hat{p}_i$ is greater up to dimension, as 
the following shows (here we write $|x|$ for the sum $|x|=x_1+\dots+x_n$, where $x=(x_1,\dots,x_n)\in\bN^n$, which corresponds to the 
$K$-dimension of $X$, if $x=\ul{\dim}(X)$, for a module $X$ in $\mod\La$). 

\begin{lemma}\label{lem:1} $|\hat{p}_i|>|p_i|$. \end{lemma}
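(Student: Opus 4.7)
The plan is to reinterpret the desired inequality as a statement about $\dim\Omega^2(S_i)$ and then invoke the standing assumption that every simple $\La$-module has $\Omega$-period exactly $4$.

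First, I would chase dimensions along the exact sequence $(*)$. Splitting $(*)$ at the three images $\Omega^k(S_i)=\operatorname{Im}(d_k)$, $k=1,2,3$, yields short exact sequences from which one reads off
$$\dim \Omega(S_i)=|p_i|-1 \qquad \text{and} \qquad \dim\Omega^2(S_i)=|\hat{p}_i|-|p_i|+1.$$
Hence the inequality $|\hat{p}_i|>|p_i|$ is equivalent to $\dim \Omega^2(S_i)\geq 2$, i.e.\ to the assertion that $\Omega^2(S_i)$ is neither zero nor simple.

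Next, I would exclude these two pathologies using Remark~\ref{period2}. If $\Omega^2(S_i)=0$, then $\Omega^k(S_i)=0$ for all $k\geq 2$, in particular $\Omega^4(S_i)\not\cong S_i$, contradicting period $4$. If instead $\Omega^2(S_i)\cong S_j$ for some vertex $j\in Q_0$, then applying $\Omega^2$ once more gives $S_i\cong\Omega^4(S_i)\cong\Omega^2(S_j)$, so $\Omega^2$ swaps $S_i$ and $S_j$; but then $S_i$ has $\Omega$-period dividing $2$, again a contradiction.

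Combining these two exclusions yields $\dim \Omega^2(S_i)\geq 2$, and therefore $|\hat{p}_i|\geq |p_i|+1>|p_i|$. The main obstacle is the conceptual step ruling out $\Omega^2(S_i)\cong S_j$, and this is precisely where the period-$4$ hypothesis is essential: if a simple of period $2$ were permitted, the argument would degenerate to the borderline equality $|\hat{p}_i|=|p_i|$.
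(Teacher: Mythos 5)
Your dimension count is correct and your reduction coincides with the paper's: from $0\to\Omega^2(S_i)\to P_i^+\to\rad P_i\to 0$ one gets $\dim_K\Omega^2(S_i)=|\hat{p}_i|-|p_i|+1$, so the lemma is equivalent to $\dim_K\Omega^2(S_i)\geq 2$, i.e.\ to $\Omega^2(S_i)$ being neither zero nor simple. The zero case is fine. The gap is in your exclusion of the simple case. If $\Omega^2(S_i)\cong S_j$ with $j\neq i$, then, as you yourself say, $\Omega^2$ \emph{swaps} $S_i$ and $S_j$ --- but that gives $S_i$ period exactly $4$, not period dividing $2$, so there is no contradiction with the standing hypotheses. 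The period-$4$ assumption alone simply does not rule this out: for a symmetric Nakayama algebra with two vertices and arrows $i\to j\to i$ one has precisely $\Omega^2(S_i)\cong S_j$, $\Omega^2(S_j)\cong S_i$, and every simple has period $4$, while $|\hat{p}_i|=|p_i|$.

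What is missing is a structural (not purely numerical) input, and it is exactly what the paper's proof supplies: if $\Omega^2(S_i)$ is simple, then $P_i^-$ is its projective cover and $P_i^+$ its injective envelope, and since $\La$ is symmetric these are one and the same indecomposable projective $P_z$. Hence there is a unique arrow starting at $i$ (ending at $z$) and a unique arrow ending at $i$ (starting at $z$); running the same argument at $z$ (using $\Omega^2(S_z)\cong S_i$) and invoking connectedness forces $Q$ to have at most two vertices, so $\La$ is a Nakayama algebra of finite representation type, which is excluded by the assumptions (cf.\ Lemma \ref{lem:2.1} and Remark \ref{period2}). You need to add this projective-cover/injective-envelope step; without it the case $j\neq i$ is genuinely open and the claimed inequality does not follow.
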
 

\begin{proof} Of course, we have an exact sequence $0\to \Omega^2(S_i)\to P_i^+ \to \Omega^1(S_i)\to 0$, where $\Omega^1(S_i)=\rad P_i$ 
has dimension vector equal to $p_i-s_i$. We claim that 
$$(\Box)\qquad\qquad |\hat{p}_i|-\dim_K\Omega^1(S_i)>1.$$ 
Indeed, if this is not the case, then the difference is $1$, and we conclude that $\Omega^2(S_i)\cong S_i$. On the other hand, $P_i^+$ 
(respectively, $P_i^-$) are injective envelope (respectively, projective cover) of $\Omega^2(S_i)$, so it would imply that both 
are isomorphic to $P_i$. It means that there is a unique arrow in $Q$ starting at $i$ which also ends at $i$, and dually, there 
is a unique arrow in $Q$ ending at $i$ which also starts at $i$. As a result $Q$ admits one vertex and two loops, which is impossible, 
due to our assumptions. Therefore, $(\Box)$ holds. In particular, we get $|\hat{p}_i|-\dim_K\Omega^1(S_i)=|\hat{p}_i|-|p_i|+1>1$, and 
hence $|\hat{p}_i|-|p_i|>0$, so we are done. \end{proof} 

\section{Period 4 and minimal relations}\label{sec:3}

In this section, we develop further consequences of the structure of the exact sequence $(*)$ associated to the simple module 
$S_i$, as described in the previous section. Actually, we will focus rather on maps and show their connection with minimal relations 
defining algebra $\La$. \smallskip 

We start with our given presentation $\La=KQ/I$ and a vertex $i\in Q_0$. We will briefly write $J$ for the Jacobson radical $\rad\La$ 
of $\Lambda$. Consider the associated exact sequence 
$$0\to S_i\to P_i \stackrel{d_3}\to  P_i^- \stackrel{d_2}\to  P_i^+ \stackrel{d_1}\to P_i \to S_i\to 0
\leqno{(*)}$$ 
where $P_i^+=P_j\oplus P_k$ and $P_i^-=P_x\oplus P_y$. We may assume that $d_1(x, y) : = \alpha x + \ba y$, since the 
induced epimorphism $(\alpha \ \ba):P_j\oplus P_k \to \Omega(S_i)=\alpha\La+\ba\La$ is a projective cover of $\Omega(S_i)$ in 
$\mod\La$. Adjusting arrows $\gamma$ or $\gamma^*$ (including impact on presentation, i.e. on generators of $I$), we can already 
say that $d_3(e_i) = (\gamma, \gamma^*)$ for some choice of the arrows $\gamma, \gamma^*$ ending at $i$ 
(see \cite[Proposition 4.3]{AGQT}). \smallskip

The kernel of $d_1$ is then $\Omega^2(S_i)=Im(d_2)$, and it has at most two minimal generators. They are images of idempotents 
$e_x\in P_x=e_x\La$ and $e_y\in P_y$ via $d_2:P_i^-\to P_i^+$. We may write them as $\vf$ and $\psi$, respectively, and they are contained 
in $P_j\oplus P_k$, so we can also write 
$$\vf = d_2(e_x,0) = (\vf_{jx}, \ \vf_{kx}) \ \ \mbox{and} \ \ \psi = d_2(0,e_y) = (\psi_{jy}, \ \psi_{ky}),$$
where $\vf_{jx}$ belongs to $e_j\La e_x$ and similarly for the other components of $\vf, \psi$. \smallskip 

The exact sequence gives information on minimal generators of the ideal $I$, which we sometimes refer to as minimal relations.
In the sense of the following lemma, arrows of $Q$ induce minimal relations.   

\begin{lemma}\label{lem:2.3} If there is an  arrow $x\to i$ then there is a minimal	generator $\rho \in e_i\La e_x$ for the ideal 
$I$ (given the presentation). \end{lemma}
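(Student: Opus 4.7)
The plan is to extract the desired minimal relation directly from the third column of the minimal projective resolution $(*)$ of $S_i$. The hypothesis that $\gamma\colon x\to i$ is an arrow will be used precisely to place $P_x$ as a direct summand of $P_i^-$.

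First I would note that $(*)$ is the beginning of the \emph{minimal} projective resolution of $S_i$ through the third term: by Section~\ref{sec:2}, $P_i^+$ is the projective cover of $\Omega(S_i)$, and, using that $\La$ is symmetric and of $\Omega$-period $4$, $P_i^-=P_x\oplus P_y$ is the projective cover of $\Omega^2(S_i)$. The arrow $\gamma\colon x\to i$ ensures $x\in i^-$, so $P_x$ is a direct summand of $P_i^-$.

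Next I would read off the relation from the corresponding column of $d_2$. With the notation fixed just before the statement, $\vf=d_2(e_x,0)=(\vf_{jx},\vf_{kx})\in P_j\oplus P_k$, and $d_1\circ d_2=0$ yields $\alpha\vf_{jx}+\ba\vf_{kx}=0$ in $\La$. Choosing any lifts $\wt\vf_{jx},\wt\vf_{kx}\in KQ$ of the two components, the element
$$\rho:=\alpha\wt\vf_{jx}+\ba\wt\vf_{kx}\in e_iKQe_x$$
lies in $I$ and has the required source $i$ and target $x$.

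The remaining and only subtle point is to show that $\rho$ is \emph{minimal}, i.e.\ $\rho\notin JI+IJ$. I would invoke the standard dictionary between the third term of a minimal projective resolution of a simple module and the minimal relations in a presentation,
$$\dim_K e_i\bigl(I/(JI+IJ)\bigr)e_x=\dim_K\operatorname{Ext}^2_{\La}(S_i,S_x),$$
and note that the right-hand side equals the multiplicity of $P_x$ in $P_i^-$, which is at least $1$ thanks to $\gamma$. Concretely, were $\rho$ in $JI+IJ$, then $\vf=d_2(e_x,0)$ could be corrected by an element of $J\cdot\Omega^2(S_i)$ so as to vanish, contradicting minimality of the projective cover $d_2\colon P_i^-\twoheadrightarrow\Omega^2(S_i)$. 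This translation from minimality of the resolution to minimality of the relation is the step that demands the most care; once it is in place, the lemma follows immediately.
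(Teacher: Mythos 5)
Your proposal is correct and follows essentially the same route as the paper: the arrow $x\to i$ places $P_x$ as a summand of $P_i^-$, the component $\vf=d_2(e_x,0)$ of the differential yields $\alpha\vf_{jx}+\ba\vf_{kx}=0$, hence an element of $e_iIe_x$, and minimality of the relation is deduced from $\vf$ being a minimal generator of $\Omega^2(S_i)$. The paper states this last step in one line ("it is a minimal relation since $\vf$ is a minimal generator"), whereas you justify it via the standard $\operatorname{Ext}^2$ dictionary, but the argument is the same.
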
 

\begin{proof} Consider the generators $\vf, \psi$ of  the kernel of $d_1$. We have $\alpha \vf_{jx} + \ba \vf_{kx}=0$ in $\La$, 
equivalently the element $\alpha \vf_{jx} + \ba\vf_{kx} \in KQ$ belongs to $I$. It is a minimal relation since $\vf$ is a minimal 
generator. Similarly the generator $\psi$ gives a minimal relation in $e_i\La e_y$. \end{proof} \medskip 

Recall that any homomorphism $d:P_x\oplus P_y\to P_j\oplus P_k$ in $\mod\Lambda$ can be represented in the matrix form 
$$M={m_{jx} \ m_{jy}\choose m_{kx} \ m_{ky}},$$ 
where $m_{ab}$ is a homomorphism $P_b\to P_a$ in $\mod\La$, identified with an element $m_{ab}\in e_a\La e_b$, for any $a\in\{j,k\}$ 
and $b\in\{x,y\}$. In this way, $d$ becomes multiplication by $M$, i.e. $d(u)=M\cdot u$, for $u\in P_i^-$ (using column notation 
for vectors in $P_i^-$ and $P_i^+$). \smallskip

Continuing with the generators of $\Omega^2(S_i)$, let $M_i$ be the matrix with column the components of $\vf$ and $\psi$, 
that is $d_2$ is given by matrix 
$$M_i={\vf_{jx} \ \psi_{jy} \choose \vf_{kx} \ \psi_{ky}}.$$ 
Rewriting compositions $d_1d_2=0$ and $d_2d_3=0$ in matrix form, we get identities 
$$(\alpha \ \ba)\cdot M_i = 0\mbox{ and }M_i\cdot {\gamma \choose \gamma^*} =0\leqno{(1)} $$
for some choice of arrows $\gamma, \gamma^*$ ending at $i$ (cf. \cite[Proposition 4.3]{AGQT}). 

\begin{rem}\label{rk}\normalfont 
Basically, identities (1) determine generators (cogenerators) of $\Omega^2(S_i)$, which are encoded in columns (rows) of 
matrix $M_i$, satisfying the following \it universal properties: 
\begin{enumerate}
\item[(i)] if $\theta={\theta_1\choose\theta 2}\in P_j\oplus P_k$ is an element $\theta\in \La e_z\setminus J^2$ 
such that $[\alpha \ \ba]\cdot\theta=0$, then $z=x$ or $y$ and there is an exact sequence isomorphic to $(*)$ with $\theta$ 
being one of the columns of $M_i$, 
\item[(ii)] if $\mu\in P_x\oplus P_y$ is an element $\mu\in e_z\La \setminus J^2$ such that 
$\mu\cdot{\gamma\choose\gamma^*}=0$, then $z=j$ or $k$ and there is an exact sequence isomorphic to $(*)$ with $\mu$ being one of 
the rows of $M_i$. \end{enumerate} \normalfont 

Indeed, for $\theta$ as in (i), by definition $\theta\in Ker(d_1)=Im(d_2)$, so $\theta$ can be written as $\theta=M_i\cdot\eta$, for 
some $\eta={\eta_1\choose\eta 2}\in P_x\oplus P_y$, $\eta\in \La e_z$. Note also that all entries 
of $M_i$ are in $J$ (equivalently, $d_2$ is in $\rad_\La$), since otherwise equality $(\alpha \ \ba)\cdot M_i=0$ implies that $\alpha$ 
or $\ba$ in $J^2$ (or $\alpha\in K\ba$), which is impossible for an arrow. But $\theta\notin J^2$, i.e. $\theta_1\notin e_jJ^2e_z$ 
or $\theta_2\notin e_kJ^2e_z$, hence we infer that $\eta\notin J$, because $\eta\in J$ would force $\theta=M_i\cdot\eta\in J^2$. As 
a result, we get that $\eta_1\notin e_x J e_z$ or $\eta_2\notin e_y J e_z$. Since for $a\neq b$ in $Q_0$, we have $e_a J e_b\simeq
\rad_\Lambda(P_b,P_a)=\Hom_\Lambda(P_b,P_a)\simeq e_a \Lambda e_b$, we conclude that $z=x$ or $y$, and in both cases $\eta_\cdot$ 
is a unit of the local algebra $e_z\Lambda e_z$. We may assume that $z=x$ (the proof in case $z=y$ is similar). In particular, then 
$\eta_1$ is a unit in $e_x \Lambda e_x$ (i.e. $\eta_1$ is a scalar multiplication of $e_x$), so we obtain the following identity 
$${\theta_1 \ \psi_{jy} \choose \theta_2 \ \psi_{ky}}=M_i\cdot {\eta_1 \ 0\choose \eta_2 \ e_y}.$$ 
Denote by $M'_i$ the matrix on the left hand side and let $N={\eta_1 \ 0\choose \eta_2 \ e_y}$. Consequently, the above identity 
$M_i'=M_i\cdot N$ translates into the following commutative diagram in $\mod\La$: 
$$\xymatrix{
0\ar[r]& S_i\ar[r]& P_i\ar[r]^{d_3}& P_x\oplus P_y\ar[r]^{d_2}& P_j\oplus P_k\ar[r]^{d_1}& P_i\ar[r]& S_i\ar[r]& 0 \\ 
0\ar[r]& S_i\ar[r]\ar[u]^{id}& P_i\ar[r]^{d_3'}\ar[u]^{id}& P_x\oplus P_y\ar[r]^{d_2'}\ar[u]^{v}
& P_j\oplus P_k\ar[r]^{d_1'=d_1}\ar[u]^{id}& P_i\ar[r]\ar[u]^{id}& S_i\ar[r]\ar[u]^{id}& 0  }$$ 
where we identify $d_1=(\alpha \ \ba)$, $d_2=M_i$, $d_3={\gamma\choose\gamma^*}$, $d_2'=M_i'$, $v=N$ (which is an isomorphism, since 
$\eta_1$ and $e_y$ are units in the corresponding local algebras), and $d_3'=v^{-1}d_3$. It follows that the bottom row of the 
above diagram is the required exact sequence. Similarly, if $z=y$, then we can construct analogous matrix $M_i'$, but with $\theta$ 
as the second column. 

{\it In other words, one can swap the original sequence for the new one, in which $M_i$ admits a fixed $\theta$ as the first (or second) 
column.}  

In a similar way, we can prove (ii), where we use cokernel of $d_3$ (instead of kernel of $d_1$); indeed by the universal 
property of cokernels one can factorize matrix ${\mu_1 \ \ \mu_2 \choose \vf_{kx} \ \psi_{ky} }$ through the cokernel of $d_3$ 
($\cong Im(d_2)$), and lift this factorization to a map $u:P_j\oplus P_k\to P_j\oplus P_k$, given by a matrix 
$N={\eta_1 \ \eta_2 \choose 0 \ \ e_k }$, such that $N\cdot M_i=M_i'$ and $\eta_1$ is a unit. This means $ud_2=d_2'$, yielding 
analogous commutative diagram with (exact) isomorphic rows. \end{rem} \medskip  

Note that the conditions (i)-(ii) mentioned above explain how minimal generators (relations) of $I$ give rise to generators of 
$\Omega^2(S_i)$, and how these two are connected via the exact sequence $(*)$, up to isomorphism (here we mean both isomorphism 
of exact sequences and isomorphisms of algebras, i.e. changing presentation of $\La$). 

Namely, we may  start with a minimal generator $\rho$ of the ideal $I$ of $KQ$, without loss of generality $\rho\in e_i\La e_j$, 
where $i, j$ are vertices of $Q$. Say $\alpha, \ba$ start at $i$ and $\beta, \beta^*$ end at vertex $j$. Then we can write $\rho$ 
as an element of $KQ$ in the following way 
$$\rho = \alpha x_1\beta + \alpha x_2\beta^* + \ba x_3\beta + \ba x_4\beta^*\leqno{(2)}$$
where the $x_i$ are linear combinations of monomials, and the expression is unique if written  in terms of the monomial basis of $KQ$. 
Consequently, we infer from (i) that an element  
$$\theta = (x_1\beta + x_2\beta^*, \ x_3\beta + x_4\beta^*)$$ 
is in the kernel of $d_1$, and it can be taken as a generator for $\Omega^2(S_i)$ (column of $M_i$), for example if 
$\theta\notin J^2$. Similarly $\mu=(\alpha x_1+\ba x_3,\alpha x_2+\ba x_4)$ gives a cogenerator (row of $M_i$), if $\mu\notin J^2$. 

\begin{rem} We note that not all minimal relations can be realized in this way. Indeed, if $\La$ is a weighted surface algebra 
$\La=\La(Q,f,m_\bullet,c_\bullet)$ with at least one arrow $\alpha\in Q_1$ such that $m_\alpha n_\alpha=2$ (virtual arrow), then there 
exist a minimal zero relation of the form $\alpha\beta\gamma=0$, which cannot be induced from an element in the second syzygy of a 
simple module. \end{rem} 

\section{Triangles and squares}\label{sec:4}

In this section we discuss some properties of triangles and squares in $Q$ with respect to minimal relations. As we will see in 
Proposition \ref{prop:2.4} below, it is natural to investigate triangles in the quiver $Q$, which appear together with paths of 
length $2$ involved in minimal relations (note that this  was an essential tool in \cite[see Proposition 4.2]{AGQT}). Similarly, 
squares come with paths of length $3$ as shown in paralell result (Lemma \ref{lem:2.8}). 

\medskip 

If $p$ is a monomial in $KQ$, we write $p\prec I$, provided that $p$ occurs as a term (summand) in some minimal relation defining $I$ 
(i.e. $p$ is involved in a minimal relation). Very often, paths of length two occur in this way as shown in \cite[Proposition 4.2]{AGQT}.   

\subsection{Paths of length 2 and triangles}\label{ssec:3.1}

\begin{prop}\label{prop:2.4} Assume $\alpha: i\to j$ and $\beta: j\to k$ are arrows such that $\alpha\beta \prec I$. Then there
is an arrow in $Q$ from $k$ to $i$, so that  $\alpha$ and $\beta$ are part of a triangle in $Q$. \end{prop}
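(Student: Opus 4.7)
The plan is to reduce the proposition to a direct application of the universal property (i) in Remark~\ref{rk}, by extracting from the given minimal relation an element of $\Omega^2(S_i)\setminus J^2$ whose ``endpoint'' vertex is $k$.

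First I would fix a minimal generator $\rho$ of $I$ in which $\alpha\beta$ occurs as a summand. Since $\alpha\in e_i(KQ)e_j$ and $\beta\in e_j(KQ)e_k$, the relation $\rho$ lies in $e_i(KQ)e_k$. Writing $\alpha,\ba$ for the arrows starting at $i$ and $\beta,\beta^*$ for the arrows ending at $k$, I would expand $\rho$ in the normal form of equation $(2)$:
$$\rho = \alpha x_1\beta + \alpha x_2\beta^* + \ba x_3\beta + \ba x_4\beta^*.$$
The hypothesis that $\alpha\beta \prec I$ means precisely that $x_1$ has a nonzero ``scalar part'' at $j$, i.e.\ $x_1 = c\, e_j + z$ with $c\in K^\times$ and $z\in J$; otherwise the monomial $\alpha\beta$ would not appear as a summand of $\rho$.

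Next I would form the element $\theta = \binom{\theta_1}{\theta_2}\in P_j\oplus P_k = P_i^+$, where $\theta_1 = x_1\beta + x_2\beta^*$ and $\theta_2 = x_3\beta + x_4\beta^*$, viewed as elements of $\La$. By construction $\theta\in \La e_k$, and $(\alpha\ \ba)\cdot\theta = \rho = 0$ in $\La$, so $\theta\in\ker(d_1)$. The crucial step is to observe that
$$\theta_1 = c\beta + (\text{elements of } J^2),$$
so $\theta_1\notin J^2$ because $\beta$ is an arrow and $c\neq 0$; hence $\theta\notin J^2$. Applying Remark~\ref{rk}(i) to $\theta$, I conclude that $k\in\{x,y\}$, where $x,y$ are the sources of the two arrows ending at $i$. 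In either case, there is an arrow from $k$ back to $i$, completing the triangle through $\alpha$ and $\beta$.

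The main obstacle is the verification $\theta\notin J^2$; everything else is bookkeeping. This nondegeneracy must be traced back to the minimality of $\rho$: if a different choice of representative of the ideal generator were allowed, one could in principle absorb $\alpha\beta$ into a longer summand and lose the constant term of $x_1$. A secondary technical point is handling the degenerate cases where $\ba$ or $\beta^*$ does not exist: in those situations the corresponding columns/rows of $\rho$ are simply omitted, and the argument goes through unchanged since $\theta_1$ alone already carries the term $c\beta\notin J^2$.
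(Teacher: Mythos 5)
Your proposal is correct and follows essentially the same route as the paper: expand a minimal relation containing $\alpha\beta$ in the normal form $(2)$, read off the resulting element $\theta$ of $\ker(d_1)$, check $\theta\notin J^2$ because the first coordinate has a nonzero arrow term, and invoke Remark~\ref{rk}(i) to identify $k$ as the source of an arrow into $i$. The only cosmetic difference is that the paper normalizes the coefficient and absorbs the $J$-part of $x_1$ into the arrow $\alpha$, whereas you carry the scalar $c$ through explicitly; both are fine.
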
 

\begin{proof} Recall we write $\bar{\alpha}$ for the other arrow starting at $i$ (if it exists), and write $\beta^*$ for the other 
arrow ending at $k$ (if it exists). Then $\alpha\beta \prec I$ means that 
$$\alpha\beta + \alpha z_0\beta + \alpha z_1\beta^* + \ba z_2\beta + \ba z_3\beta^* = 0$$ 
in $\La$ where $z_0\in J$, and $z_i\in \La$. We may assume $z_0=0$, otherwise we replace $\alpha$ by $\alpha(1+z_0)$. We use the exact 
sequence $(*)$. Then the identity above gives an element $\vf$ in the kernel of $d_1$, namely
$$\vf = (\beta + z_1\beta^*, z_2\beta + z_3\beta^*)$$
Clearly, $\vf\notin J^2$, because its first coordinate admits an arrow. Therefore, using Remark \ref{rk}(i) for $\theta:=\vf$ (viewed 
as a column), we conclude that $P_k$ is a direct summand of $P^-_i$, i.e. $k$ is a source of an arrow ending at $i$, and the claim 
follows. \end{proof} 

Note that this holds for any symmetric periodic algebra of period 4 (i.e. also for wild ones). 

\medskip

\begin{expl} \label{expl:2.6}\normalfont In  \cite[Section 11]{AGQT}, there is a quiver $Q$ of an algebra which is symmetric and 
periodic of period 4, but the algebra is wild (so out of our current interest; see also \cite{BIKR} and \cite[Corollary 2]{AGQT}). 
This is mentioned as a consequence of the classification in \cite{AGQT}, however we can observe it already, as the above 
proposition implies that the algebra must be wild. \smallskip

Namely, it follows from Proposition \ref{prop:2.4} that any path $\rho$ in $Q$ of length two which does not involve a loop satisfies 
$\rho\nprec I$. Therefore $B/J^3$ contains a wild subalgebra, given by a quiver of type $\widetilde{\widetilde{E}}_7$ without any 
relations, as in \cite[see the proof of Proposition 4.2]{AGQT}. \end{expl} 

\begin{lemma}[Triangle Lemma]\label{lem:2.7}  Assume $Q$ contains  a triangle
        \[
 \xymatrix@R=3.pc@C=1.8pc{
%  \xymatrix@C=.8pc{
    x
    \ar[rr]^{\gamma}
    && i
    \ar@<.35ex>[ld]^{\alpha}
    \\
    & j
    \ar@<.35ex>[lu]^{\beta}
  }
\]
If $\gamma\alpha \not\prec I$ then also $\alpha\beta\not\prec I$. \end{lemma}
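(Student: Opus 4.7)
The plan is to prove the contrapositive: assume $\alpha\beta\prec I$ and deduce $\gamma\alpha\prec I$. The key idea is to apply the method of Proposition \ref{prop:2.4} not to $S_i$ (which would only produce information about a relation in $e_j\La e_i$), but to the simple $S_x$ sitting at the third vertex of the triangle; this will extract a new minimal relation in $e_x\La e_j$ in which $\gamma\alpha$ appears as a term.

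Following the first step of Proposition \ref{prop:2.4}, I would absorb the leading correction into $\alpha$ to write the witnessing minimal relation in the standardized form
$$\alpha\beta + \alpha z_1\beta^* + \ba z_2\beta + \ba z_3\beta^* = 0 \quad\text{in }\La,$$
where $\ba$ denotes the other arrow starting at $i$ and $\beta^*\colon q\to x$ the other arrow ending at $x$ (either possibly absent). Regrouping by the right factor gives $(A,B)\cdot{\beta \choose \beta^*}=0$ with $A=\alpha+\ba z_2\in e_i\La e_j$ and $B=\alpha z_1+\ba z_3\in e_i\La e_q$. Since $\alpha$ is an arrow, and $\alpha, \ba$ are linearly independent modulo $J^2$, we have $A\notin J^2$. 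Hence Remark \ref{rk}(ii) applied to the sequence $(*)$ for $S_x$ --- whose $d_3$ equals ${\beta \choose \beta^*}$ --- allows us, after passing to an isomorphic presentation, to take $(A,B)$ as the row of $M_x$ indexed by the target vertex $i$ of the arrow $\gamma$.

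In this modified presentation, the identity $(\gamma,\bar\gamma)M_x=0$ coming from $d_1d_2=0$ yields, in its first coordinate, a relation of the shape
$$\gamma A + \bar\gamma c' = 0 \quad\text{in }\La,$$
where $c'$ is the bottom-left entry of the modified $M_x$. Unwinding $A$, this reads $\gamma\alpha + \gamma\ba z_2 + \bar\gamma c' \in I$. By the minimality argument of Lemma \ref{lem:2.3} --- now applied to the first column ${A \choose c'}$ of the new $M_x$, which is a minimal generator of $\Omega^2(S_x)$ in the new presentation --- this is a minimal relation in $e_x\La e_j$. Decomposing $\gamma\ba z_2$ and $\bar\gamma c'$ into the canonical form (2) using the arrows $\alpha,\alpha^*$ ending at $j$ shows that the monomial $\gamma\alpha$ appears with coefficient $e_i$ (up to a nonzero scalar), hence $\gamma\alpha\prec I$.

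The main technical obstacle will be bookkeeping around Remark \ref{rk}(ii): passing to the isomorphic presentation modifies both rows and columns of $M_x$, and implicitly replaces $\gamma,\bar\gamma$ by $\gamma\eta_1$ and $\gamma\eta_2+\bar\gamma$ for a unit $\eta_1\in e_i\La e_i$ and some $\eta_2\in e_i\La e_{i''}$. One must verify that the extracted minimal relation, when translated back to the original arrows, still exhibits the path $\gamma\alpha\in KQ$ with nonzero coefficient; this holds because the leading term of $\eta_1$ is a nonzero scalar multiple of $e_i$, so $\gamma\eta_1\alpha = c\gamma\alpha + (\text{higher})$ for some $c\in K^\times$. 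Edge cases in which one of $\ba$, $\bar\gamma$, $\beta^*$ is absent shrink the matrix $M_x$ accordingly, and the argument goes through verbatim.
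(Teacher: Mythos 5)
Your proposal is correct and follows essentially the same route as the paper's proof: both extract the element $(\alpha+\ba z,\,\ast)$ annihilated by ${\beta\choose\beta^*}$ from the minimal relation witnessing $\alpha\beta\prec I$, install it as the row of $M_x$ indexed by $i$ via Remark \ref{rk}(ii), and read off $\gamma\alpha\prec I$ from the identity $(\gamma\ \bar{\gamma})\cdot M_x=0$. The only differences are cosmetic (contrapositive versus contradiction) plus your extra bookkeeping on the change of presentation, which the paper leaves implicit.
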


\begin{proof} Consider the exact sequence for the simple module $S_x$ 
$$0\to S_x \to P_x \to P_j\oplus P_{j^*}   \to P_i\oplus P_{\bar{i}} \to  P_x\to S_x \to 0,$$ 
where $j^*=s(\beta^*)$ and $\bar{i}=t(\bar{\gamma})$. Taking minimal generators for $\Omega^2(S_x)$ gives the columns of the matrix 
$M_x$, that is 
$$M_x = \left(\begin{matrix}\vf_{ij} & \psi_{ij^*}\cr \vf_{\bar{i}j} & \psi_{\bar{i}j^*}\end{matrix}\right).$$
It satisfies $(\gamma \ \bar{\gamma})\cdot M_x=0$ and $M_x\cdot{\beta \choose \beta^*}=0$.

Suppose $\gamma\alpha\nprec I$, but (for a contradiction) $\alpha\beta\prec I$. Then there is a minimal relation of the form
$$\alpha\beta + \alpha z_1\beta + \alpha z_2\beta^* + \ba z_3\beta + \ba z_4\beta^*=0 $$
with $z_1\in J$, and we may assume again $z_1=0$. Now, if we define
$$\theta:= (\alpha + \ba z_3, \alpha z_2 + \ba z_4),$$
then $\theta\cdot {\beta\choose \beta^*}=0$ and $\theta\notin J^2$ (since it involves an arrow), so by Remark\ref{rk}(ii), we can 
take $\theta$ as the first row of $M_x$. In particular, $\vf_{ij}=\alpha+\ba z_3$, hence it follows that 
$\gamma(\alpha + \ba z_3) + \bar{\gamma} \vf_{\bar{i}j} =0$, and we obtain $\gamma\alpha \prec I$, a contradiction. 
\end{proof} \medskip 

Consequently, for a triangle as above, either all paths $\gamma\alpha,\alpha\beta,\beta\gamma\prec I$, or none of them. Hence 
triangles in $Q$ split into two families: triangles, say \emph{of type R} (for which all $\gamma\alpha,\alpha\beta,\beta\gamma\prec I$) 
and triangles \emph{of type N}. We will further see (Section \ref{sec:5}) similar distinction between non-regular vertices. \smallskip 

Let us finish this part with the following lemma. 

\begin{lemma}\label{lem:2.9} Assume $i$ is a 1-vertex which is part of a triangle 

\[
%  \xymatrix@R=2pc@C=1.5pc{
%  \xymatrix@R=3.5pc@C=1.8pc{
  \xymatrix@R=3.pc@C=1.8pc{
%  \xymatrix@C=.8pc{
    & i
    \ar[rd]^{\alpha}
    \\
  x 
    \ar[ru]^{\gamma}
  %  \ar@<-.5ex>[rr]_{\eta}
    && j
   \ar[ll]_{\beta}
% \ar[ld]^{\beta}
%    \\
%    & d
 %   \ar[lu]^{\nu}
  }
\]

Then both $x$ and $j$ must be 2-vertices. \end{lemma}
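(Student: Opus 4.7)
The plan is to establish the four conditions $|j^-|\geq 2$, $|j^+|\geq 2$, $|x^-|\geq 2$ and $|x^+|\geq 2$ separately, using the exact sequence $(*)$ attached to $S_i$ together with Lemma \ref{lem:2.1} and the strict inequality of Lemma \ref{lem:1}. Since $i$ is 1-regular with $i^+=\{\alpha\}$ and $i^-=\{\gamma\}$, the middle terms of $(*)$ for $S_i$ collapse to $P_i^+=P_j$ and $P_i^-=P_x$, so the general identity $|p_i^+|=|p_i^-|$ specializes to the key equality $|p_j|=|p_x|$, which will be pitted against Lemma \ref{lem:1} twice.

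First I would handle $|j^-|\geq 2$ and $|x^+|\geq 2$ via Lemma \ref{lem:2.1}. If $|j^-|=1$, then $\alpha$ would simultaneously be the only arrow starting at $i$ and the only arrow ending at $j$, forcing $\La$ to be of finite representation type and contradicting the standing assumption that every simple has period exactly $4$ (Remark \ref{period2}). The symmetric statement with $\gamma$ and $x$ in place of $\alpha$ and $j$ gives $|x^+|\geq 2$.

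For the remaining halves I would apply Lemma \ref{lem:1} at the vertices $j$ and $x$. Suppose $|j^+|=1$; then $\beta:j\to x$ is the unique arrow out of $j$, so $P_j^+=P_x$ and hence $|p_j^+|=|p_x|=|p_j|$, using the key equality from $(*)$ for $S_i$. This contradicts the strict inequality $|p_j^+|>|p_j|$ of Lemma \ref{lem:1}, so $|j^+|\geq 2$. Dually, if $|x^-|=1$ then $P_x^-=P_j$, and combining $|p_x^+|=|p_x^-|=|p_j|$ with $|p_j|=|p_x|$ yields $|p_x^+|=|p_x|$, again contradicting Lemma \ref{lem:1}. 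Together with the first step this shows that $j$ and $x$ are both 2-vertices.

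I do not anticipate any real obstacle: the whole argument is book-keeping the dimension equality coming from $(*)$ for $S_i$ against the two strict inequalities of Lemma \ref{lem:1} applied at $S_j$ and $S_x$. The one subtle point worth flagging is that the appeal to Lemma \ref{lem:2.1} genuinely needs the period to be exactly $4$ rather than just dividing $4$; this is in force throughout the paper by Remark \ref{period2}.
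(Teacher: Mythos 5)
Your proof is correct, and for half of the statement it takes a genuinely different route from the paper. The first step (deducing $|j^-|\geq 2$ and $|x^+|\geq 2$ from Lemma \ref{lem:2.1}, under the standing infinite-type/period-exactly-4 assumption) is exactly what the paper does. For the remaining two conditions the paper argues differently: assuming, say, $|x^-|=1$, it writes $p_x=s_x+\ul{\dim}(\beta\La)$ and $p_j=s_j+\ul{\dim}\,\beta\La+\ul{\dim}(\bar{\beta}\La/\beta\La\cap\bar{\beta}\La)$, where $\bar{\beta}$ is the second arrow out of $j$ supplied again by Lemma \ref{lem:2.1}, compares this with the full dimension-vector equality $p_x=p_j$ coming from $(*)$ for $S_i$, and reaches the contradiction $\bar{\beta}\in\beta\La$. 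You instead apply the strict inequality $|\hat{p}_\bullet|>|p_\bullet|$ of Lemma \ref{lem:1} at the vertices $j$ and $x$: if $j^+=\{\beta\}$ then $\hat{p}_j=p_x$, and if $x^-=\{\beta\}$ then $\hat{p}_x=p_j$, and either equality collides with $|p_j|=|p_x|$. This is shorter, needs only total dimensions rather than dimension vectors, and is in fact the same mechanism the paper itself uses later to exclude proper non-regular vertices of type R (Lemma \ref{poper}). Both arguments are sound; the paper's version extracts a little more structural information along the way (the forced containment $\bar{\beta}\La\subseteq\beta\La$), but nothing beyond the stated conclusion is needed elsewhere.
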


\begin{proof} By Lemma \ref{lem:2.1}, there must be another arrow, say $\bar{\gamma}$, starting at $x$, and there must be another 
arrow, say $\alpha^*$, ending at $j$. From the exact sequence for $S_i$ we know that $p_x = p_j$.

(i) Assume vertex $x$ is not a $2$-vertex, then $\beta$ is the only arrow ending at $x$. Therefore 
$$e_x\La/S_x \cong \beta\La.$$
Moreover, again by Lemma \ref{lem:2.1}  there must be another arrow starting at $j$, call it $\bar{\beta}$. Hence 
$${\rm rad} (P_j) = \beta\La + \bar{\beta}\La.$$
As a result, we get the following equalities of dimension vectors: 
$$p_x=s_x+\ul{\dim}(\beta\Lambda)\mbox{ and }p_j=s_j+\ul{\dim}(\beta\La+\bar{\beta}\La)=s_j+ \ul{\dim}\beta\La + 
\ul{\dim}(\bar{\beta}\La/\beta\La \cap \bar{\beta}\La).$$ 
Comparing $p_x=p_j$, we conclude that $\ul{\dim}(\bar{\beta}\La/\beta\La \cap \bar{\beta}\La)=s_x-s_j$, so this must be zero 
(i.e. $x=j$), since otherwise $s_x-s_j$ has a negative coordinate, which cannot happen for a dimension vector of a $\La$-module. 

In particular since $S_j, S_x$ are simple and the vector space dimension of $\beta\La\cap \bar{\beta}\La$ is equal to the vector 
space dimension of $\bar{\beta}\La$. However, we have an inclusion of these spaces, so they are equal. Now 
$\bar{\beta}\La = \beta\La \cap \bar{\beta}\La  \subseteq \beta\La$, hence $\bar{\beta}\in \beta\La$. But this is not possible 
since $\bar{\beta}$ is an arrow $\neq \beta$.  

(ii) The proof that $j$ must be a 2-vertex is dual. \end{proof} 

\subsection{Paths of length $\geq 3$}\label{ssec:3.2} In this short paragraph we will consider a bit longer paths, i.e. of length 
$3$ or $4$ (and in particular, induced squares). Suppose the quiver of $\La$ has a subquiver 
$$ u\stackrel{\delta}\longrightarrow i  \stackrel{\alpha}\longrightarrow k \stackrel{\beta}\longrightarrow t 
\stackrel{\gamma}\longrightarrow j $$ 
We have the following counterpart of Proposition \ref{prop:2.4}. 

\begin{lemma}\label{lem:2.8} Suppose $\alpha\beta \not\prec I$ or  $\beta\gamma\not\prec I$, and $\alpha\beta\gamma \prec I$. 
Then there is an arrow $j\to i$. \end{lemma}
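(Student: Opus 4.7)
The plan is to adapt the strategy of Proposition \ref{prop:2.4} to the length-three setting, using the exact sequence $(*)$ for $S_i$. A direct appeal to Remark \ref{rk}(i) is not available here: the element of $\Omega^2(S_i)$ extracted from the minimal relation containing $\alpha\beta\gamma$ will begin with the length-two path $\beta\gamma$ and therefore land inside $J^2 P_i^+$. I would instead argue by contradiction, using the minimality of the relation in $I/(IJ+JI)$.

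First I would use $\alpha\beta\gamma \prec I$ to fix a minimal generator $\rho \in e_i I e_j$ of the form
\[
\rho = \alpha(\beta + X_1')\gamma + \alpha X_2\gamma^{*} + \ba X_3\gamma + \ba X_4\gamma^{*},
\]
where $\gamma^*$ (resp.\ $\ba$) denotes the other arrow ending at $j$ (resp.\ starting at $i$), should it exist. Replacing $\beta$ by $\beta + X_1'$ is a legitimate change of presentation because $X_1' \in J^{\geq 2}$, so I may assume $X_1' = 0$. Setting
\[
\vf := (\beta\gamma + X_2\gamma^{*},\ X_3\gamma + X_4\gamma^{*}) \in P_k \oplus P_{\bar k} = P_i^+,
\]
one checks $d_1(\vf) = \rho$ vanishes in $\La$, placing $\vf$ in $\Omega^2(S_i)$ with endpoint $j$.

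Next, suppose for contradiction that no arrow $j \to i$ exists, and let $u, \bar{u}$ be the sources of arrows ending at $i$, so that $j \notin \{u, \bar u\}$. The minimal generators $\vf_0, \psi_0$ of $\Omega^2(S_i)$ are the columns of $M_i$, ending at $u$ and $\bar u$ respectively, so the top $\Omega^2(S_i)/\Omega^2(S_i)J$ is supported only at those two vertices. Hence the image of $\vf$ in this top vanishes, and we may write $\vf = \vf_0 a + \psi_0 b$ with $a \in e_u \La e_j$ and $b \in e_{\bar u}\La e_j$, both automatically in $J$.

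Lifting this equality from $\Lambda$ back to $KQ$ introduces a correction term $\tau \in I\cdot P_i^+(KQ)$; applying $d_1$, which is left multiplication by the arrows $\alpha, \ba$, yields
\[
\rho = \rho_0 \tilde a + \sigma_0 \tilde b + d_1(\tau) \in IJ + JI,
\]
where $\rho_0 = d_1(\vf_0), \sigma_0 = d_1(\psi_0) \in I$ and $d_1(\tau) \in JI$. This contradicts the minimality of $\rho$ as a two-sided generator of $I$, so an arrow $j \to i$ must exist. The main obstacle is the careful bookkeeping in passing from $\La$ back to $KQ$ and tracking $\tau$; the hypothesis that $\alpha\beta \not\prec I$ or $\beta\gamma \not\prec I$ appears to safeguard the argument by ruling out the ``trivial'' occurrence of $\alpha\beta\gamma$ in a minimal generator via products $\sigma\cdot\gamma$ or $\alpha\cdot\sigma'$ coming from a shorter minimal relation, in which case the element $\rho$ above would already lie in $IJ + JI$ by construction and render the contradiction vacuous.
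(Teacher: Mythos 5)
Your argument is essentially correct, but it is a genuinely different proof from the one in the paper, and the difference is worth understanding. The paper splits on whether the element $\theta=(\beta\gamma+z_1\gamma^*,\,z_2\gamma+z_3\gamma^*)$ lies in $J^2$: if not, Remark \ref{rk}(i) applies directly; if it does, the authors analyse the identity $\beta\gamma+z_1\gamma^*=\vf_{ku}v+\psi_{ku'}w$ term by term in $KQ$, deduce that $u=t$ (so that $(\alpha,\beta,\delta)$ is a triangle), and then play the Triangle Lemma (which needs the hypothesis $\alpha\beta\nprec I$) against the relation coming from the first row of $M_i$ to reach a contradiction. Your route avoids the case split: since $\Omega^2(S_i)=\vf_0\La+\psi_0\La$ with $\vf_0=\vf_0e_u$, $\psi_0=\psi_0e_{\bar u}$, the top of $\Omega^2(S_i)$ is supported only at sources of arrows ending at $i$, so in the absence of an arrow $j\to i$ your element lands in $\rad\Omega^2(S_i)=\vf_0J+\psi_0J$, and lifting to $KQ$ and applying $d_1$ pushes $\rho$ into $IJ+JI$. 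This is in effect the classical identification of $e_i\bigl(I/(IJ+JI)\bigr)e_j$ with $\operatorname{Ext}^2_\La(S_i,S_j)\cong\Hom(P_i^-,S_j)$, and the bookkeeping you flag as the main obstacle does go through: the correction term has entries in $I$ so $d_1(\tau)\in JI$, and $\tilde a,\tilde b$ can be lifted into the arrow ideal precisely because $j\neq u,\bar u$. What you should confront squarely, rather than in the vague closing sentence, is that your proof never uses the hypothesis ``$\alpha\beta\nprec I$ or $\beta\gamma\nprec I$'' at all, so you are actually proving the stronger statement that \emph{any} path from $i$ to $j$ occurring in a minimal relation forces an arrow $j\to i$ (subsuming Proposition \ref{prop:2.4} as well). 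That strengthening needs to be reconciled with the unlabelled Remark at the end of Section \ref{sec:3}: the minimal zero relations $\alpha\beta\gamma=0$ of weighted surface algebras are ``not induced from the second syzygy'' only in the sense that the associated $\theta$ lies in $J^2$, so that Remark \ref{rk}(i) cannot be invoked --- not in the sense that $\theta\in\rad\Omega^2(S_i)$, which your computation shows is impossible for a genuinely minimal relation. Two small repairs: the substitution $\beta\mapsto\beta+X_1'$ is only a legitimate change of arrows if $X_1'$ has no length-zero or length-one terms (in any case it is unnecessary --- just carry $X_1'$ along inside the first coordinate of $\vf$); and $\rho_0,\sigma_0$ must be defined as $d_1$ applied to \emph{lifts} of $\vf_0,\psi_0$ to $KQ$, since $d_1(\vf_0)=d_1(\psi_0)=0$ in $\La$ --- it is exactly this vanishing that places $\rho_0,\sigma_0$ in $I$.
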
 

\begin{proof} We work in $KQ$, using the basis consisting of paths. We deal with the case $\alpha\beta \not\prec I$ (the other case 
is dual, working with the opposite algebra).

After possibly adjusting arrow $\beta$, there is a minimal relation of the form
$$\alpha\beta\gamma + \alpha z_1\gamma^* + \ba z_2\gamma + \ba z_3 \gamma^* \in I$$
Consider the exact sequence for the simple module $S_i$,
$$0\to S_i \to P_i \to P_u\oplus P_{u'} \to P_k\oplus P_l\to P_i\to S_i\to 0 $$
Here $\alpha:i\to k, \ba:i\to l$ start at $i$, and $\delta: u\to i$ and $\delta^*: u'\to i$ are the arrows ending at $i$, where by 
convention, $\ba$ or $\delta^*$ may not exist (then we omit $P_l$ and $P_{u'}$). 

We take $\Omega(S_i) = \alpha \La + \ba \La$ and $\Omega^2(S_i) =  \{ (x, y)\in P_k\oplus P_l\mid \alpha x + \ba y = 0\}$. From the 
exact sequence, this is equal to $\vf \La + \psi \La$ where $\vf = \vf e_u$ and $\psi = \psi e_{u'}$.  Let $M_i$ be the matrix with 
columns $\vf$ and $\psi$. Then (for some choice of arrows $\delta, \delta^*$) we have
$$(\alpha \ \ba)\cdot M_i=0 \ \  M_i\cdot {\delta \choose \delta^*}=0. $$

The minimal relation above gives rise to the following element $\theta$ which belongs to $\Omega^2(S_i)$,
$$\theta = (\beta\gamma + z_1\gamma^*, \ z_2\gamma + z_3\gamma^*)$$
If $\theta\notin J^2$, then we may take $\vf:=\theta$ as the first column of $M_i$, since $\beta\gamma\prec \theta_1$ must be 
in $e_k\La e_u$ (see also Remark \ref{rk}(i)). It follows that $j=u$, and hence $\delta$ is an arrow from $j=u$ to $i$.

\bigskip

Suppose now $\theta\in J^2$. We will show that this leads to a contradiction. The radical of $\Omega^2(S_i)$ is equal to 
$\Omega^2(S_i)J=\vf J + \psi J$. So we can write $\theta = \vf v + \psi w$ and $v, w\in J$ and we can take them in $Je_j$. Then
$$\beta\gamma + z_1\gamma^* = \vf_{ku}v + \psi_{ku'}w$$
Say $\beta\gamma$ occurs in $\vf_{ku}v$. We can write $v= ve_j = v_1\gamma + v_2\gamma^*$ with $v_i\in \La$ (which need not be in the 
radical). We can write $\vf_{ku} = \beta y_1 + \bar{\beta} y_2$ with $y_1, y_2\in KQ$. Then 
$$\vf_{ku}v = \beta y_1v_1\gamma + \beta y_1v_2\gamma^* + \bar{\beta}y_2v $$
Then $\beta\gamma$ is a term of $\beta y_1v_1\gamma$. Therefore $y_1v_1$ (which we can take equal to  $y_1v_1e_t$) is equal to $e_t$ 
modulo the radical. It follows that $y_1v_1$ is a unit in $e_t\La e_t$. 

However, it factors through vertex $u$, and {\it it follows that $u=t$}. So we have a triangle $(\alpha, \beta, \delta)$ and 
$\alpha\beta \not\prec I$. It follows (from our triangle lemma) that also $\beta\delta \not\prec I$. 

\bigskip

On the other hand, we exploit the identity for $\vf_{ku}v$ a bit further. Since $y_1v_1$ is a unit, we may assume $\beta = \beta y_1$. 
Then $\vf_{ku} = \beta + \bar{\beta}y_2$ and recall this is the top left entry of the Matrix $M_i$ above. We have 
$M_i{\delta\choose \delta^*} = 0$ which gives
$$\beta\delta + \bar{\beta}y_2\delta + \psi_{ku'}\delta^*=0,$$
hence $\beta\delta \prec I$, a contradiction. \end{proof} 

\bigskip 
The above proposition shows that paths of length $3$ involved in minimal relations induce squares in $Q$. We have a result similar 
to previous Triangle Lemma, stated as follows. 

\begin{lemma}[Square Lemma] Assume $Q$ contains a square 
$$\xymatrix{1\ar[r]^{\delta} & 2\ar[d]^{\alpha} \\ 4\ar[u]^{\gamma} & 3\ar[l]_{\beta}}$$ 
If $\alpha\beta\gamma\prec I$, then $\beta\gamma\delta\prec I$. \end{lemma}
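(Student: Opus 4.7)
The plan is to run the analysis at the vertex $2 = s(\alpha)$, exploiting the exact sequence $(*)$ for $S_2$. The strategy mirrors that of the Triangle Lemma and of Lemma~\ref{lem:2.8}, but now uses both the column and the row structure of the matrix $M_2$: the given relation in $e_2 I e_1$ containing $\alpha\beta\gamma$ is re-encoded as a minimal generator of $\Omega^2(S_2)\cdot e_1$, and then the dual row relation of $M_2$ yields a minimal relation in $e_3 I e_2$ containing $\beta\gamma\delta$.

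By hypothesis there is a minimal relation $\rho\in e_2 I e_1$ with $\alpha\beta\gamma$ as a term. After a standard adjustment of the arrow $\beta$ (absorbing the unit part of its coefficient, as in the proof of Proposition~\ref{prop:2.4}), I would write
$$\rho = \alpha\beta\gamma + \alpha w_2\gamma^* + \bar\alpha w_3\gamma + \bar\alpha w_4\gamma^* = 0 \quad \mbox{in }\Lambda,$$
where $\bar\alpha, \gamma^*$ denote the second arrows starting at $2$ and ending at $1$, if present. Setting
$$\theta := (\beta\gamma + w_2\gamma^*,\ w_3\gamma + w_4\gamma^*)\in P_3\oplus P_\tau,$$
one has $\theta\in \Omega^2(S_2)\cdot e_1$. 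The crucial substep is to verify that $\theta$ is a minimal generator of $\Omega^2(S_2)$, i.e.\ $\theta\notin \Omega^2(S_2)\cdot J$. Otherwise any decomposition $\theta = \varphi j_1 + \psi j_2$ with $j_1,j_2\in J$ (where $\varphi,\psi$ are the columns of $M_2$) pulls back via multiplication by $(\alpha\ \bar\alpha)$ to express $\rho$ as an element of $I\cdot J + J\cdot I$, contradicting the minimality of $\rho$.

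Once $\theta$ is established as a minimal generator, Remark~\ref{rk}(i) allows one to replace $(*)$ by an isomorphic exact sequence in which $\theta$ is the first column of $M_2$; the associated column operation changes $\delta$ by a nonzero scalar modulo $J^2$, so the quiver $Q$ is unchanged. The first-row relation of the new matrix then reads
$$(\beta\gamma+w_2\gamma^*)\,\delta + \psi'_{3\sigma}\,\delta^* = 0,$$
which is a minimal relation in $e_3 I e_2$ by Lemma~\ref{lem:2.3} applied to the opposite algebra. Expanding in the path basis, the monomial $\beta\gamma\delta$ appears with coefficient $1$ and does not cancel with any other summand, since the remaining terms end in either $\gamma^*$ or $\delta^*$. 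Hence $\beta\gamma\delta\prec I$.

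The principal obstacle is the minimality argument for $\theta$, which transports the minimality of $\rho$ into the language of second syzygies. A minor technical subtlety arises if $\sigma := s(\delta^*) = 1$ (two arrows between vertices $1$ and $2$): the column operation aligning $\theta$ with the first column of $M_2$ may mix $\delta$ with $\delta^*$, and one has to argue (up to possibly swapping the roles of $\delta$ and $\delta^*$, both being arrows $1\to 2$) that the conclusion still involves the prescribed arrow $\delta$.
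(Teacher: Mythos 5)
Your argument is correct and follows the same architecture as the paper's proof: work with the exact sequence $(*)$ at vertex $2$, convert the minimal relation containing $\alpha\beta\gamma$ into an element $\theta\in\Omega^2(S_2)e_1$, install $\theta$ as a column of $M_2$, and read off $\beta\gamma\delta\prec I$ from the row identity $M_2{\delta\choose\delta^*}=0$ together with the non-cancellation observation that the remaining terms involve $\gamma^*$ or $\delta^*$. Where you genuinely diverge is at the crux, namely why $\theta$ is a minimal generator of $\Omega^2(S_2)$. The paper argues by contradiction: it assumes $\beta\gamma\delta\nprec I$, infers $\beta\gamma\nprec I$, and then asserts that $\theta$ cannot lie in $\rad\Omega^2(S_2)$ because $\beta\gamma\nprec I$. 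Your argument is direct: a decomposition $\theta=\vf j_1+\psi j_2$ with $j_1,j_2\in J$ lifts, after multiplying by $(\alpha\ \ba)$, to an expression of $\rho$ as an element of $I\cdot J+J\cdot I$ (the columns $\vf,\psi$ correspond to elements of $I$, and the lifting errors land in $J\cdot I$), contradicting minimality of $\rho$. This buys you two things: you avoid the intermediate implication ``$\beta\gamma\delta\nprec I\Rightarrow\beta\gamma\nprec I$'', which the paper does not justify, and your transport of minimality from $I$ to $\Omega^2(S_2)$ works verbatim for monomials of any length, so the same proof covers longer analogues. One point you should make explicit: Remark \ref{rk}(i) is stated under the hypothesis $\theta\notin J^2$, which fails here since $\theta_1=\beta\gamma+\cdots\in J^2$; what your minimality step actually delivers is the weaker condition $\theta\notin\Omega^2(S_2)J$, and you should note that the proof of the Remark still goes through under it (the only role of $\theta\notin J^2$ there is to force $\eta\notin J$, which $\theta\notin\Omega^2(S_2)J$ also guarantees). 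Your flagged subtlety when $s(\delta^*)=1$ is real but harmless at the paper's level of rigour: the base change $N$ of Remark \ref{rk} alters $\delta$ only by a unit of $e_1\La e_1$, so $\beta\gamma\delta$ survives as a term of the lifted row relation.
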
  

\begin{proof} Suppose that $\alpha\beta\gamma\prec I$, but $\beta\gamma\delta\nprec I$. In particular, we have also 
$\beta\gamma \nprec I$. Consider the exact sequence for $S_2$. Then $\Omega^2(S_2)$ has generators being the columns of the 
matrix 
$$M_2 = \left(\begin{matrix} \vf_{31} & \psi_{3, 1^*}\cr \vf_{\bar{3}1} & \psi_{\bar{3}1^*}\end{matrix}\right).$$
and $(\alpha \ \ba)M_2=0$ and $M_2{\delta\choose \delta^*}=0$. By our assumption  we have minimal relation
$\alpha(\beta\gamma + x_1\gamma +  x_2\bar{\gamma}) + \ba(x_3\gamma + x_4\bar{\gamma}) = 0$ with $x_1\in J^2$. This gives an 
element 
$$\vf = (\beta\gamma + x_1\gamma + x_2\bar{\gamma}, \ x_3\gamma + x_4\bar{\gamma})^t\in\Omega^2(S_2)$$
which cannot be in the radical of $\Omega^2(S_2)$, since $\beta\gamma \not\prec I$. So we can take this as the first column of $M_2$. 
It follows that $(\beta\gamma + x_1\gamma + x_2\bar{\gamma})\delta + \psi_{31^*}\delta^*=0$, so $\beta\gamma\delta \prec I$, and we 
get a contradiction. \end{proof} \medskip 

The following lemma shows that sometimes one can relate paths of length 3 and 4. 

\begin{lemma}\label{lem:4.7} Suppose we have a path in $Q$ of the form 
$$ u\stackrel{\delta}\longrightarrow i  \stackrel{\alpha}\longrightarrow k \stackrel{\beta}\longrightarrow t 
\stackrel{\gamma}\longrightarrow j $$  
with $|k^+|=1$ and $\delta\alpha,\alpha\beta,\beta\gamma\nprec I$. If $\delta\alpha\beta\gamma\prec I$, then 
$\alpha\beta\gamma\prec I$. \end{lemma}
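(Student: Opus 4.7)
The plan is to adapt the strategy of Lemma~\ref{lem:2.8} to the exact sequence $(*)$ for $S_u$, with an additional descent step that exploits $|k^+|=1$ to pass from a minimal relation in $e_u\La e_j$ to one in $e_i\La e_j$.

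Denote by $\bar\delta$ the other arrow starting at $u$ (if it exists) and by $\bar\gamma$ the other arrow ending at $j$ (if it exists). Since $\delta\alpha\beta\gamma\prec I$, I would fix a minimal relation $\rho\in e_u\La e_j$ containing $\delta\alpha\beta\gamma$ as a term. Because $|k^+|=1$, every path from $k$ of positive length begins with $\beta$, so after adjusting the arrow $\beta$ within its coset modulo $J^2$ one may put $\rho$ into the canonical form
$$\rho = \delta(\alpha\beta\gamma + \alpha z_1\bar\gamma + \ba z_2\gamma + \ba z_3\bar\gamma) + \bar\delta\mu \in I,$$
where $\mu\in e_{i_2}\La e_j$ with $i_2:=t(\bar\delta)$ (set $\bar\delta\mu=0$ if $\bar\delta$ does not exist). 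Writing $R:=\alpha\beta\gamma+\alpha z_1\bar\gamma+\ba z_2\gamma+\ba z_3\bar\gamma\in e_i\La e_j$, the element $\theta:=(R,\mu)^t$ lies in $\Omega^2(S_u)\cdot e_j\subseteq P_u^+\cdot e_j$, and I would split into two cases based on whether $\theta$ lies in $J^2P_u^+$.

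In the case $\theta\notin J^2P_u^+$, Remark~\ref{rk}(i) lets me take $\theta$ as the first column of $M_u$, so $u_1=j$, producing a new arrow $\kappa\colon j\to u$, with $\vf_{ij}=R$. The identity $M_u\cdot(\kappa,\delta_2)^t=0$ read off in the first component yields a minimal relation
$$R\kappa + \psi_{iu_2}\delta_2 = 0$$
in $e_i\La e_u$ containing $\alpha\beta\gamma\kappa$ as a term. The finishing step is to descend this length-$4$ relation in $e_i\La e_u$ to a length-$3$ minimal relation in $e_i\La e_j$ with $\alpha\beta\gamma$ as a term: the hypothesis $|k^+|=1$ forces the $\alpha$-branch of $\vf_{ij}$ to begin with $\alpha\beta$, and the non-$\prec I$ hypotheses on $\alpha\beta$ and $\beta\gamma$ should prevent simplification by shorter relations, so that combining these constraints with Remark~\ref{rk}(ii) applied to the exact sequence of $S_i$ recovers the required minimal relation.

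In the case $\theta\in J^2P_u^+$, I would write $\theta=\vf v+\psi w$ with $v,w\in Je_j$, expand $R=\vf_{iu_1}v+\psi_{iu_2}w$, and decompose $\vf_{iu_1}=\alpha y_1+\ba y_2$ and $v=v_1\gamma+v_2\bar\gamma$. The term $\alpha\beta\gamma$ in $R$ must then arise from $\alpha y_1v_1\gamma$, forcing $\beta$ to be a term of $y_1v_1\in e_k\La e_t$. By $|k^+|=1$, $y_1v_1$ begins with $\beta$, and since $y_1v_1$ factors through $u_1$ the only possibility is $u_1=t$, giving a new arrow $\delta_1\colon t\to u$ and a $4$-cycle $u\to i\to k\to t\to u$. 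Using $M_u\cdot(\delta_1,\delta_2)^t=0$ together with the normalized form $\vf_{it}=\alpha\beta+\ba y_2+\cdots$, one extracts $\alpha\beta\delta_1\prec I$ and $\delta\alpha\beta\prec I$. Combined with the Triangle Lemma~\ref{lem:2.7}, the Square Lemma, and the hypotheses $\delta\alpha,\alpha\beta,\beta\gamma\nprec I$, one should obtain a contradiction, thereby excluding this case.

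The principal obstacle, and the step I have the least handle on, is the descent in Case~1: turning a length-$4$ minimal relation in $e_i\La e_u$ into a length-$3$ minimal relation in $e_i\La e_j$ with $\alpha\beta\gamma$ as a term. This is precisely where the combined force of $|k^+|=1$ and the non-$\prec I$ hypotheses is needed to rule out alternative shapes for the minimal relation.
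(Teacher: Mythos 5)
Your setup --- passing to the exact sequence for $S_u$, using $|k^+|=1$ to collapse all $\delta\alpha$-terms of the minimal relation into $\delta\alpha\beta\gamma$ after adjusting an arrow, and recognizing $\theta=(\alpha\beta\gamma+\bar{\alpha}q,\;\mu)^t$ as an element of $\Omega^2(S_u)$ --- agrees with the paper. The divergence, and the gap, is the case split on $\theta\in J^2$ imported from Lemma \ref{lem:2.8}; neither branch closes. In Case 1 you land on a minimal relation $R\kappa+\psi_{iu_2}\delta_2=0$ in $e_i\La e_u$, i.e.\ the \emph{length-four} path $\alpha\beta\gamma\kappa$ involved in a relation: the row identity of $M_u$ necessarily appends the new arrow $\kappa$ on the right, and nothing in the hypotheses lets you strip it off again. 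The ``descent'' you flag as the principal obstacle is not a matter of ruling out alternative shapes of the relation; it genuinely requires a different argument. In Case 2 you aim for a contradiction, but none is available: $\theta$ may perfectly well lie in the radical of $\Omega^2(S_u)$, and the facts you would extract there ($u_1=t$, $\delta\alpha\beta\prec I$, $\alpha\beta\delta_1\prec I$) are all compatible with $\delta\alpha,\alpha\beta,\beta\gamma\nprec I$ --- the Triangle and Square Lemmas only cycle such relations around the resulting square, they do not clash with the hypotheses.

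The idea you are missing is that no case split is needed. Since $\Omega^2(S_u)=Im(M_u)$, one can always write $\theta=M_u\cdot{\kappa_1\choose\kappa_2}$ with $\kappa_1\in P_v$, $\kappa_2\in P_{v'}$ (where $v,v'$ are the sources of the arrows ending at $u$), whether or not either $\kappa$ is a unit. The first coordinate then gives the identity $\alpha\beta\gamma+\bar{\alpha}q=\vf_{iv}\kappa_1+\psi_{iv'}\kappa_2$ in $\La$, hence a nontrivial element of $I$ having $\alpha\beta\gamma$ in its support. Writing that element as a combination $\sum_s a_s\rho_s b_s$ of minimal relations, the path $\alpha\beta\gamma$ must occur as a term of some $a_s\rho_s b_s$; admissibility of $I$ rules out a single arrow occurring in a relation, and $\alpha\beta\nprec I$, $\beta\gamma\nprec I$ exclude the proper subwords of length two, so necessarily $a_s,b_s$ are scalars and $\alpha\beta\gamma\prec\rho_s$. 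This is exactly where the two non-$\prec I$ hypotheses on $\alpha\beta$ and $\beta\gamma$ enter, and it finishes the proof in one stroke.
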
 

\begin{proof} Suppose that $\delta\alpha\beta\gamma\prec I$ and let $\bar{\delta}:u\to u'$ and $\bar{\alpha}:i\to i'$ denote the second 
arrow starting at $u$ and $i$ (if exist). By the assumptions on $k$, we conclude that any path $p\in e_i A$ starting from $\delta\alpha$ 
must go through $\delta\alpha\beta$. Hence $\delta\alpha\beta\gamma$ as a minimal generator of $I$ is involved in a minimal relation 
of the form 
$$\delta\alpha\beta\gamma+ \delta\alpha\beta p+\delta\bar{\alpha}q+\bar{\delta}r=0,$$  
where $p,q,r\in J$. After adjusting $\gamma:=\gamma+p$, we may change the presentation to get $p=0$. Consequently, the element 
$\rho=(\alpha\beta\gamma+\bar{\alpha}q,r)$ belongs to $\Omega^2(S_u)=ker([\delta \ \bar{\delta}])$. Finally, if 
$u^-=\{\sigma,\sigma^*\}$, and $v=s(\sigma)$, $v'=s(\sigma^*)$, then $\Omega^{2}(S_u)\cong Im(M_u)$, where 
$M_u:P_v\oplus P_{v'}\to P_i\oplus P_{i'}$ is given by the matrix 
$$\left(\begin{matrix} \vf_{uv} & \psi_{u v'}\cr \vf_{u'v} & \psi_{u'v'}\end{matrix}\right),$$ 
hence $\rho=M_u\cdot {\kappa_1 \choose \kappa_2}$, for some $\kappa_1\in P_{v}$ and $\kappa_2\in P_{v'}$. But then 
$$\alpha\beta\gamma+\bar{\alpha}q=\vf_{uv}\kappa_1+\psi_{uv'}\kappa_2,$$ 
and therefore, $\alpha\beta\gamma$ is generated by minimal relations. But, $\alpha\beta\nprec I$ and $\beta\gamma\nprec I$, hence 
$\alpha\beta\gamma$ is also involved in some minimal relation of $I$, as claimed. \end{proof}

\bigskip
 
\section{Non-regular vertices}\label{sec:5} 

In this section, we give some partial results describing non-regular vertices. \smallskip 

Clearly, for $Q$ biserial, the non-negular vertices $i$ satisfy either $|i^-|=1$ and $|i^+|=2$ or $|i^-|=2$ and $|i^+|=1$. In the 
first case $i$ is called a $(1,2)$-vertex, whilst in the second a $(2,1)$-vertex. Let $i$ be a $(1,2)$-vertex of the form 
$$\xymatrix@R=0.4cm{j\ar[r]^{\alpha} & i \ar[r]^{\beta}\ar[rd]_{\bar{\beta}} & k \\ && l}$$ 
We call $i$ a vertex \emph{of type R} (respectively, \emph{of type N}), provided that both $\alpha\beta\prec I$ and 
$\alpha\bar{\beta}\prec I$ (respectively, both $\alpha\beta\nprec I$ and $\alpha\bar{\beta}\nprec I$). If $k\neq l$, $i$ is said 
to be \emph{proper}. Similar notions can be defined for $(2,1)$-vertices. Whenever we consider a $(1,2)$-vertex $i$, we keep the 
above notation for arrows starting and ending at $i$. 

\begin{rem} \normalfont We recall that there exist infinitely many pairwise non-isomorphic TSP4 algebras $A$ containing arbitrary large 
number of $(1,2)$- and $(2,1)$-vertices of both types R or N. Indeed, one may take any weighted surface algebra $\La$ (see \cite{WSA-GV}) 
containing arbitrary number of 'blocks' of the form 
$$\xymatrix@R=1cm@C=1.5cm{& \circ \ar[rd]& \\
y_i\ar[rd]^{\alpha_i}\ar[ru]& &  x_i\ar[ll]\ar[ldd] \\ 
&\bullet\ar[ru]\ar@<-0.07cm>[d]_{\xi_i}& \\ 
&\bullet\ar@<-0.07cm>[u]\ar[luu]&}$$  
with $\xi_i$ being a virtual arrow. Then using results of \cite[see also Section 4]{HSS}, we conclude that the virtual mutation 
$A=\La(\xi)$ with respect to the sequence $\xi=(\xi_1,\dots,\xi_n)$ of virtual arrows is a TSP4 algebra and vertices $x_1,\dots,x_n$ are 
$(1,2)$-vertices of type N (in $Q_A$), whereas $y_1,\dots,y_n$ are $(2,1)$-vertices of type N (in definition of $\La$, we have to pick 
weights $m_{\xi_i}=m_{\alpha_i}=1$, for any $i\in\{1,\dots,n\}$). \smallskip 

For vertices of type R one has to consider so called {\it weighted generalized triangulation algebras} \cite{SS}, given by quivers 
which are glueings of blocks of five types I-V. Without going into details, we only mention that for any such algebra $A$ (it is a TSP4 
algebra in most cases), its Gabriel quiver contains two $(1,2)$-vertices and two $(2,1)$-vertices per each block of type V, and all 
these vertices are of type R (this follows directly from the shape of relations in $A$). Therefore, one can easily construct a TSP4 
algebra with arbitrary large number of non-regular vertices of type R (in this case the number of $(1,2)$-vertices is equal to the 
number of $(2,1)$-vertices). But then the Gabriel quiver of $A$ is not biserial. \end{rem} 

It turns out that there are no non-regular vertices of type R, if the Gabriel quiver is biserial. For proper ones, it is pretty easy 
to see, as the following lemma shows. 

\begin{lemma}\label{poper} There are no proper non-regular vertices of type R. \end{lemma}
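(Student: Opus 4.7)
The plan is to derive a contradiction by combining the standing dimension-vector identities $p_i^+=p_i^-$ and $p_j^+=p_j^-$ coming from the exact sequences $(*)$ at the simples $S_i$ and $S_j$, and then invoking Lemma \ref{lem:1} at the vertex $j$.

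First, I would apply Proposition \ref{prop:2.4} to each of the relations $\alpha\beta\prec I$ and $\alpha\bar{\beta}\prec I$ to produce arrows $\gamma\colon k\to j$ and $\gamma^*\colon l\to j$. Because $i$ is proper we have $k\neq l$, so $\gamma$ and $\gamma^*$ are distinct arrows ending at $j$; biseriality then forces $j^-=\{\gamma,\gamma^*\}$, and consequently $P_j^-=P_k\oplus P_l$, giving $p_j^-=p_k+p_l$.

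Next, I look at the exact sequence $(*)$ for $S_i$. Since $|i^-|=1$ we have $P_i^-=P_j$, and since $k\neq l$ we have $P_i^+=P_k\oplus P_l$; the identity $p_i^+=p_i^-$ therefore reads $p_k+p_l=p_j$. Combining this with the previous display and the identity $p_j^+=p_j^-$ for vertex $j$, I obtain
\[
\hat p_j \;=\; p_j^+ \;=\; p_j^- \;=\; p_k+p_l \;=\; p_j,
\]
so in particular $|\hat p_j|=|p_j|$. This directly contradicts Lemma \ref{lem:1} applied at the vertex $j$, and the proof is complete.

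The argument is short and has no serious obstacle; the only subtlety worth flagging is that properness ($k\neq l$) is precisely what ensures that both $P_j^-$ and $P_i^+$ split as honest direct sums $P_k\oplus P_l$ with two genuinely distinct summands, so that the two exact sequences interlock cleanly at the dimension-vector level. Without properness these identifications would collapse and the contradiction would fail, which is consistent with the improper case being genuinely possible.
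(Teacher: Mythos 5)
Your argument is correct and is essentially identical to the paper's proof: both apply Proposition \ref{prop:2.4} twice to produce two distinct arrows into $j$, use biseriality to conclude $p_j^-=p_k+p_l$, combine this with $p_i^+=p_i^-$ to get $\hat{p}_j=p_j$, and contradict Lemma \ref{lem:1}. The only omission is that the lemma also covers $(2,1)$-vertices, which you should dispatch with the dual argument, as the paper does in one line.
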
 

\begin{proof} Suppose $i$ is a $(1,2)$-vertex of type R. In particular, $\alpha\beta\prec I$ yields an arrow $\gamma:k\to j$, whereas 
$\alpha\bar{\beta}:l\to j$, an arrow $\delta:l\to j$, due to Proposition \ref{prop:2.4}. Suppose now $i$ is proper. Then 
$j^-=\{\gamma,\delta\}$, since $Q$ is biserial, and hence $p_j^-=p_k+p_l$. But $p_i^-=p_i^+$ gives $p_j=p_k+p_l$, because $i$ is a 
$(1,2)$-vertex, and therefore, we get $p_j=p_j^-=\hat{p}_j$, which is a contradiction with Lemma \ref{lem:1}. Dual arguments provide 
the proof for $(2,1)$-vertices. \end{proof} 

We complete the claim as follows. 

\begin{thm} There are no non-regular vertices of type R. \end{thm}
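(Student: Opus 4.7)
The plan is to extend Lemma~\ref{poper} to the non-proper case. By passing to the opposite algebra it suffices to treat a $(1,2)$-vertex $i$ of type R; Lemma~\ref{poper} disposes of the proper case, so I would suppose $i$ is non-proper, i.e.\ the unique in-arrow is $\alpha:j\to i$ and both out-arrows $\beta,\bar{\beta}:i\to k$ target the same vertex $k$. From the sequence $(*)$ at $S_i$ we get $P_i^+=P_k\oplus P_k$ and $P_i^-=P_j$, so $p_j=2p_k$. Proposition~\ref{prop:2.4} applied to $\alpha\beta\prec I$ and to $\alpha\bar{\beta}\prec I$ yields at least one arrow $k\to j$; let $N\in\{1,2\}$ be the number of such arrows. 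If $N=2$, biseriality forces $j^-$ to consist of both arrows, both starting at $k$, so $\hat{p}_j=2p_k=p_j$, contradicting Lemma~\ref{lem:1}. If $N=1$, call the arrow $\gamma:k\to j$; when $|j^-|=1$ we get $\hat{p}_j=p_k<2p_k=p_j$, again contradicting Lemma~\ref{lem:1}. Thus I reduce to $N=1$ and $|j^-|=2$, with $j^-=\{\gamma,\gamma'\}$, $\gamma':m\to j$ and $m\neq k$. Moreover Lemma~\ref{lem:2.1}, applied to $\alpha$ (whose target satisfies $i^-=\{\alpha\}$), forces $|j^+|=2$: write $j^+=\{\alpha,\alpha'\}$ with $\alpha':j\to m'$.

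The heart of the argument is then the exact sequence $(*)$ for $S_j$, where $P_j^-=P_k\oplus P_m$, $P_j^+=P_i\oplus P_{m'}$, and the matrix $M_j$ has one column $\phi$ of source $k$ and one column $\psi$ of source $m$; in particular the top of $\Omega^2(S_j)$ is $S_k\oplus S_m$, whose $S_k$-component is one-dimensional. Since $i$ is of type R, there are two minimal generators of $I$ in $e_j\Lambda e_k$ with leading terms $\alpha\beta$ and $\alpha\bar{\beta}$; Remark~\ref{rk}(i) then yields two source-$k$ elements $\theta^{(1)},\theta^{(2)}\in\Omega^2(S_j)$, each outside $J^2$, whose first components have leading arrows $\beta$ and $\bar{\beta}$ respectively, and after adjusting the exact sequence I may take $\phi=\theta^{(1)}$. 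The one-dimensionality of the $S_k$-part of the top forces $\theta^{(2)}-c\phi\in\Omega^2(S_j)\cdot J$ for some $c\in K$, and I can write $\theta^{(2)}-c\phi=\phi u+\psi v$ with $u\in e_k J e_k$ and $v\in e_m J e_k$. Comparing first components modulo $J^2$: the left side is $\bar{\beta}-c\beta$, while on the right $\phi_{ik}u\in J\cdot J=J^2$ and $\psi_{im}v\in J^3$---the latter because $i^+=\{\beta,\bar{\beta}\}$ both target $k\neq m$, so there is no arrow $i\to m$ and hence $\psi_{im}\in J^2$. Thus $\bar{\beta}-c\beta\in J^2$, contradicting the linear independence of $\beta,\bar{\beta}$ in $J/J^2$. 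The $(2,1)$-case follows by passing to $\Lambda^{\mathrm{op}}$.

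The main delicate point I expect is the interface with Remark~\ref{rk}(i): one must be careful that the two minimal generators of $I$ supplied by type R yield two source-$k$ elements in $\Omega^2(S_j)$ outside $J^2$ whose first-component leading terms are linearly independent in $J/J^2$, which is where the basis-independent reading of \emph{type R} really enters. Once that is set up, the final $J/J^2$-contradiction is one line; the preliminary reductions use only Lemma~\ref{lem:1}, the identity $p_j=2p_k$, and Lemma~\ref{lem:2.1}.
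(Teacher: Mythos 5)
Your preliminary reductions are sound: the identity $p_j=2p_k$ from the sequence $(*)$ at $S_i$, the case analysis on the number of arrows $k\to j$ via Lemma \ref{lem:1}, and the appeal to Lemma \ref{lem:2.1} to force $|j^+|=2$ all check out, and they run roughly parallel to the first part of the paper's argument (which instead establishes that $k$ is a $(2,1)$-vertex and $j$ a $2$-vertex). The gap is exactly the step you flag as delicate, and it is not a technicality but the crux: \emph{type R does not supply two minimal generators of $I$ in $e_j\La e_k$ with linearly independent leading terms}. The definition of $\prec I$ only requires that $\alpha\beta$ and $\alpha\bar{\beta}$ each occur as a term of \emph{some} minimal relation, and a single minimal relation of the form $\alpha\beta+\alpha\bar{\beta}+(\text{terms in higher powers of }J)=0$ already witnesses both. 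Such a relation produces only \emph{one} source-$k$ element of $\Omega^2(S_j)$, whose first component is congruent to $\beta+\bar{\beta}$ modulo $J^2$, and your comparison then yields the vacuous conclusion $0\in J^2$ rather than $\bar{\beta}-c\beta\in J^2$. Worse, the very fact you use to close the argument --- that the $S_k$-isotypic part of the top of $\Omega^2(S_j)$ is one-dimensional --- already implies that \emph{every} element of $\Omega^2(S_j)e_k$ has first component proportional to $\vf_{ik}$ modulo $J^2$ (since $\psi_{im}\in J^2$), so two elements with independent leading terms can never exist. Your contradiction therefore only fires in a situation that is a priori impossible, while the genuinely available configuration, $\vf_{ik}\equiv c_1\beta+c_2\bar{\beta}$ with $c_1,c_2\neq 0$ --- which is precisely what type R amounts to in the given presentation --- is left standing.

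The paper closes the non-proper case by a substantially different and longer route (in its labelling $1=i$, $2=k$, $3=j$): it shows that $2$ is a $(2,1)$-vertex and $3$ a $2$-vertex, uses \emph{tameness} to get $\dim e_1J^2/e_1J^3\leq 1$ and normalizes $\bar{\beta}\gamma\in\beta\gamma J$, arranges $\gamma\alpha=0$ from the sequence for $S_1$, extracts the minimal relation $\alpha\beta+\vf_{31}\bar{\beta}=0$ from $\Omega^2(S_2)$, and finally feeds this into the sequence for $S_3$ to force $\beta\gamma\in\operatorname{soc}(e_1\La)\cap e_1\La e_3=0$, contradicting symmetry. Some input of this kind (tameness, the socle of a symmetric algebra) seems unavoidable here; an argument carried out purely at the level of $J/J^2$, as yours is, cannot distinguish the degenerate relation $\alpha(\beta+\bar{\beta})+\cdots=0$ from the two independent relations you would need.
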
 

\begin{proof} By previous lemma, it is sufficient to prove that there is no $(1,2)$-vertex $i$ of type R with $k=l$ (i.e. non-proper 
one). Suppose to the contrary, that such a vertex exists. For simplicity, we will use notation $1,2,3$ for vertices, respectively, 
$i,k=l$ and $j$. Since $1=i$ is of type R, we get an arrow $\gamma:2\to 3$ (see Proposition \ref{prop:2.4}), and consequently, $Q$ 
admits the following subuiver 
$$\xymatrix{1\ar@<+0.35ex>[rr]^{\bar{\beta}}\ar@<-0.35ex>[rr]_{\beta} & & 2\ar[ld]^{\gamma} \\ & 3\ar[lu]^{\alpha} & }$$ 
Of course, $1$ is a non-regular vertex, by the assumption. We claim that also $2$ is a non-regular vertex. Indeed, if this is not the 
case, then there is an arrow $\bar{\gamma}:2\to x$, $\bar{\gamma}\neq\gamma$, and moreover, $x\neq 3$, because otherwise, we would 
get a subquiver $\xymatrix@C=0.35cm{1\ar@<+0.35ex>[r]\ar@<-0.35ex>[r]&2\ar@<+0.35ex>[r]\ar@<-0.35ex>[r]&3}$, hence $p_1=p_3$, because 
$p_2^-=p_2^+$, and so $\hat{p}_1=p_1^-=p_3=p_1$, which gives a contradiction with Lemma \ref{lem:1}. Consequently, we have no arrows 
$x\to 1$, so both $\beta\bar{\gamma}\nprec I$ and $\bar{\beta}\bar{\gamma}\nprec I$, due to Proposition \ref{prop:2.4}. But then $A$ 
admits a wild (hereditary) factor algebra of the form $\xymatrix@C=0.35cm{1\ar@<+0.35ex>[r]\ar@<-0.35ex>[r]&2\ar[r]^{\bar{\gamma}}&x}$, 
a contradiction. This proves that $2$ is a $(2,1)$-vertex. \smallskip 

In particular, using $p_1^-=p_1^+$ and $p_2^-=p_2^+$ one gets $p_1=p_2$ and $p_3=2p_1$. It follows also that $3^-\cup 3^+\supsetneq
\{\gamma,\alpha\}$, and hence $3$ is a $2$-vertex (note: $p_3^-=p_3^+$). In particular, there may be a loop $\rho$ at vertex $3$, and 
then $Q_1=\{\alpha,\beta,\bar{\beta},\gamma,\rho\}$). Otherwise, instead of $\rho$ there are arrows starting and ending at $3$, and 
other vertices or arrows. We start with properties which hold in both cases. \smallskip

We may assume that $e_1J^3 = e_1\beta\gamma J$ and $\bar{\beta}\gamma \in e_1J^3$. By tameness of $A$, there must be a minimal relation 
involving at least one of $\beta\gamma, \bar{\beta}\gamma$. This means that $\dim e_1J^2/e_1J^3\leq 1$ and clearly it is non-zero. So 
we may assume it is spanned by the coset of $\beta\gamma$. Then $\bar{\beta}\gamma = c\beta\gamma + \psi$ for $c\in K$ and 
$\psi \in e_1J^3$. If $c\neq 0$ then we replace the arrow $\bar{\beta}$ by $\bar{\beta} - c\beta$, to get the claim. 

We write the relation as
$$\bar{\beta}\gamma = \beta\gamma \Theta_1\gamma + \beta\gamma\Theta_2\rho\ \ 
 \ (\Theta_1\in J^2, \ \  \Theta_2\in \La).  \leqno{(*)} $$

Note also that we may assume $\gamma\alpha = 0$. Indeed, using the exact sequence for the simple module $S_1$, one can see that the 
second syzygy $\Omega^2(S_1)$ has one minimal generator, say $(\vf_{23}, \psi_{23})$, satisfying both $\vf_{23}\alpha=0$ and 
$\psi_{23}\alpha =0$. Applying now the relation (*), we may fix the generator of the form $(-(\gamma\Theta_1\gamma + \gamma\Theta_2\rho),\gamma)$ (previously adjusting $\gamma$, to get $\psi_{23}=\gamma$). In particular, $\gamma\alpha =0$ for this choice. 
\smallskip

Consider the exact sequence for $S_2$. This gives
$$0\to \Omega^{-1}(S_2) \cong (\beta, \bar{\beta})\La \to P_1\oplus P_1 \to P_3 \to \gamma \La \cong \Omega(S_2)\to 0 $$ 
Since $\gamma\alpha=0$ we have $\alpha\La \subset \Omega^2(S_2)$. It is not in the radical of $\Omega^2(S_2)$ since 
$\alpha$ is an arrow. From the exact sequence, $\Omega^2(S_2)$ has one more generator, call it $\vf_{31}$, and 
we have the minimal relation
$$\alpha\beta + \vf_{31}\bar{\beta} = 0 \leqno{(**)}$$

Assume now that there is a loop at vertex $3$ and consider the exact sequence for $S_3$. This gives an exact sequence 
$$0\to \Omega^{-1}(S_3) \cong (\gamma, \rho')\La \to P_2\oplus P_3 \to \ P_1\oplus P_3 \to \alpha\La \oplus \rho\La  
\cong \Omega(S_3)\to 0 $$
where $\rho'$ is a version of $\rho$, and the middle map is given by matrix $M_3={\vf_{12} \ \psi_{13} \choose \vf_{32} \ \psi_{33}}$, 
which describes the generators of $\Omega^2(S_3)$. Then $(\alpha \ \rho)M_3=0$ and $M_3{\gamma \choose \rho'} = 0$. We can write the 
minimal relation (**) as $0=\alpha\beta \ + \ \alpha\phi'\bar{\beta} + \rho\phi''\bar{\beta}$ where $\phi'\in \La$ and $\phi''\in J$, 
and therefore $\Omega^2(S_3)$ has a generator $(\beta + \phi'\bar{\beta},\phi''\bar{\beta})$. This can be taken as the first column 
in  $M_3$. The first row of $M_3$ gives now that we have a minimal relation $(\beta + \vf'\bar{\beta})\gamma + \psi_{13}\rho'=0$. Now 
$\bar{\beta}\gamma$ is in $\beta\gamma J$ and $\psi_{13}\rho'$ also is in $J^3$. Therefore $\beta\gamma\in J^3$. But we have seen 
that $e_1J^3 = \beta\gamma J$, so we deduce $e_1J^3  \subseteq e_1J^4$ and hence $e_1J^3=0$. As a result, $\beta\gamma J=0$ and 
$\beta\gamma\in {\rm soc}(e_1\La) \cap e_1\La e_3=0$, which is not possible for a symmetric algebra (see \cite[I.3.5]{E1}). \smallskip 
 
In general, we have arrows $\ba: 3\to x$ and $\gamma^*: y\to 3$. The exaxt sequence for $S_3$ is now of the form 
$$0\to \Omega^{-1}(S_3) \cong (\gamma, \gamma^*)\La \to P_2\oplus P_y \to \ P_1\oplus P_x \to \alpha\La \oplus \ba\La  
\cong \Omega(S_3)\to 0 $$
Exactly as in the first case, we rewrite (**) which gives the first column of the matrix $M_3$. Then from the first row we get the 
identity $0 = (\beta + \phi'\bar{\beta})\gamma + \psi_{1y}\gamma^*$, and we get the same contradiction as before. \end{proof} \bigskip

\end{document}